\newtheorem{theorem}{Theorem}
\newtheorem{prop}{Proposition}
\newtheorem{lem}{Lemma}
\newtheorem{corol}{Corollary}
\newtheorem{remark}{Remark}
\newtheorem{example}{Example}
\DeclareSymbolFont{AMSb}{U}{msb}{M}{n}
\DeclareMathSymbol{\Eout}{\mathbin}{AMSb}{"45}
\DeclareMathSymbol{\PR}{\mathbin}{AMSb}{"50}
\DeclareMathSymbol{\N}{\mathbin}{AMSb}{"4E}
\DeclareMathSymbol{\R}{\mathbin}{AMSb}{"52}
\DeclareMathSymbol{\bX}{\mathbin}{AMSb}{"58}
\DeclareMathSymbol{\bY}{\mathbin}{AMSb}{"59}
\DeclareMathSymbol{\bZ}{\mathbin}{AMSb}{"5A}
\newcommand{\ca}{{\cal A}}
\newcommand{\cd}{{\cal D}}
\newcommand{\cf}{{\cal F}}
\newcommand{\cu}{{\cal U}}
\newcommand{\cx}{{\cal X}}
\newcommand{\Ftil}{\tilde{F}}
\newcommand{\Gtil}{G_m}
\newcommand{\Htil}{\tilde{H}}
\newcommand{\ptil}{\tilde{p}}
\newcommand{\Ptil}{\tilde{P}}
\newcommand{\hhbar}{{\bar{h}}}
\newcommand{\Hbar}{{\bar{H}}}
\newcommand{\vbar}{{\bar{v}}}
\newcommand{\Pihat}{{\hat{\Pi}}}
\newcommand{\bfu}{{\bf u}}
\newcommand{\bfv}{{\bf v}}
\newcommand{\bfU}{{\bf U}}
\newcommand{\bfV}{{\bf V}}
\newcommand{\E}{{\hbox{\bf E}}}
\newcommand{\Emu}{{\hbox{\bf E}_\mu}}
\newcommand{\EP}{{\hbox{\bf E}_P}}
\newcommand{\fndot}{{\,\cdot\,}}
\newcommand{\indic}{{\hbox{\bf 1}}}
\newcommand{\supp}{{\rm supp}}
\newcommand{\Jeps}{J_\epsilon}
\newcommand{\half}{{\frac{1}{2}}}
\title{A Class of Non-Parametric Statistical Manifolds modelled on Sobolev Space}
\author{Nigel J.~Newton
  \thanks{School of Computer Science and Electronic Engineering, University of Essex,
    Wivenhoe Park, Colchester, CO4 3SQ, United Kingdom. njn@essex.ac.uk}}
\begin{document}

\maketitle

\begin{abstract}
We construct a family of non-parametric (infinite-dimensional) manifolds of finite
measures on $\R^d$.  The manifolds are modelled on a variety of weighted Sobolev
spaces, including Hilbert-Sobolev spaces and mixed-norm spaces.  Each supports the
Fisher-Rao metric as a weak Riemannian metric.  Densities are expressed in terms of
a {\em deformed exponential} function having linear growth.  Unusually for the Sobolev
context, and as a consequence of its linear growth, this ``lifts'' to a nonlinear
superposition (Nemytskii) operator that acts continuously on a particular class of
mixed-norm model spaces, and on the fixed norm space $W^{2,1}$; i.e.~it maps each of
these spaces continuously into itself.  It also maps continuously between other
fixed-norm spaces with a loss of Lebesgue exponent that increases with the number of
derivatives.  Some of the results make essential use of a
log-Sobolev embedding theorem.  Each manifold contains a smoothly embedded
submanifold of {\em probability} measures.  Applications to the stochastic partial
differential equations of nonlinear filtering (and hence to the Fokker-Planck
equation) are outlined.

Keywords: Banach Manifold; Bayesian Estimation; Fisher-Rao Metric; Fokker-Planck
Equation; Hilbert Manifold; Information Geometry; Log-Sobolev Inequality;
Nonlinear Filtering; Non-parametric Statistics; Sobolev Space.

2010 MSC: 46N30 60D05 60H15 62B10 93E11
\end{abstract}

\section{Introduction} \label{se:intro}

In recent years there has been rapid progress in the theory of {\em information
geometry}, and its application to a variety of fields including {\em asymptotic
statistics}, {\em machine learning}, {\em signal processing} and {\em statistical
mechanics}. (See, for example, \cite{niba2,niba3}.) Beginning with C.R.~Rao's
observation that the Fisher information can be interpreted as a Riemannian metric
\cite{rao1}, information geometry has exploited the formalism of manifold theory in
problems of statistical estimation.  The finite-dimensional (parametric) theory is
now mature, and is treated pedagogically in \cite{amar1,barn1,chen1,laur1,muri1}.
The archetypal example is the finite-dimensional {\em exponential model}, which is
based on a finite set of real-valued random variables defined on an underlying
probability space $(\bX,\cx,\mu)$.  Affine combinations of these are exponentiated
to yield probability density functions with respect to the reference measure $\mu$.
This construction induces a topology on the resulting set of probability measures,
that is compatible with the statistical divergences of estimation theory, derivatives
of which can be used to define the Fisher-Rao metric and covariant derivatives
having various statistical interpretations.

The first successful extension of these ideas to the non-parametric setting appeared
in \cite{pise1}, and was further developed in \cite{gipi1,piro1,cepi1}.  These papers
follow the formalism of the exponential model by using the log of the density as a
chart.  This approach requires a model space with a strong topology: the exponential
Orlicz space.  It has been extended in a number of ways.  In \cite{loqu1}, the
exponential function is replaced by the so-called {\em $q$-deformed exponential},
which has an important interpretation in statistical mechanics.  (See chapter 7 in
\cite{naud1}.)  The model space used there is $L^\infty(\mu)$.  A more general class
of deformed exponential functions is used in \cite{vica1} to construct families of
probability measures dubbed {\em $\varphi$-families}.  The model spaces used are
Musielak-Orlicz spaces.

One of the most important statistical divergences is the Kullback-Leibler
(KL) divergence.  For probability measures $P$ and $Q$ having densities $p$ and $q$
with respect to $\mu$, this is defined as follows:
\begin{equation}
\cd(P|Q) = \int p\log(p/q)d\mu. \label{eq:KLdivgen}
\end{equation}
The KL divergence can be given the bilinear representation
$\langle p,\log p-\log q\rangle$, in which probability densities and their logs take
values in dual function spaces (for example, the Lebesgue spaces $L^\lambda(\mu)$ and
$L^{\lambda/(\lambda-1)}(\mu)$ for some $1<\lambda<\infty$).  Loosely speaking, in
order for the KL divergence to be smooth on an infinite-dimensional manifold, the
charts of the latter must ``control'' both the density $p$ and its log, and this
provides one explanation of the need for strong topologies on the model spaces of
non-parametric exponential models.  This observation led to the construction
in \cite{newt4} of an infinite-dimensional statistical manifold modelled
on Hilbert space.  This employs a ``balanced chart'' (the sum of the density and
its log), which directly controls both.  This chart was later used in \cite{newt6}
in the development of Banach manifolds modelled on the Lebesgue spaces
$L^\lambda(\mu)$, for $\lambda\in[2,\infty)$.  These give increasing degrees of
smoothness to statistical divergences.  An ambient manifold of {\em finite} measures
was also defined in \cite{newt6}, and used in the construction of $\alpha$-parallel
transport on the embedded statistical manifold.

These manifolds make no reference to any topology that the underlying sample space
$\bX$ may possess.  Statistical divergences measure dependency between abstract random
variables (those taking values in measurable spaces) without reference to any other
structures that these spaces may have.  Nevertheless, topologies, metrics and linear
structures on $\bX$ play important roles in many applications.  For example, the
Fokker-Planck and Boltzmann equations both quantify the evolution of probability
density functions on $\R^d$, making direct reference to the latter's topology through
differential operators.  A natural direction for research in infinite-dimensional
information geometry is to adapt the manifolds outlined above to such problems by
incorporating the topology of the sample space in the model space.  One way of
achieving this is to use model spaces of Sobolev type.  This is carried out in the
context of the exponential Orlicz manifold in \cite{lopi1}, where it is applied to the
spatially homogeneous Boltzmann equation.  Manifolds modelled on the Banach spaces
$C_b^k(B;\R)$, where $B$ is an open subset of an underlying (Banach) sample space,
are developed in \cite{newt8}, and manifolds modelled on Fr\'{e}chet spaces of
smooth densities are developed in \cite{babm1,brmi1} and \cite{newt8}.

The aim of this paper is to develop Sobolev variants of the Lebesgue $L^\lambda(\mu)$
manifolds of \cite{newt4,newt6} when the sample space $\bX$ is $\R^d$.  Our
construction includes, as a special case, a class of Hilbert-Sobolev
manifolds. In developing these, the author was motivated by applications in
{\em nonlinear filtering}.  The equations of nonlinear filtering for
diffusion processes generalise the Fokker-Planck equation by adding a term that
accounts for partial observations of the diffusion.  Let $(X_t,Y_t,t\ge 0)$ be a
$d+1$-vector Markov diffusion process defined on a probability space
$(\Omega,\cf,\PR)$, and satisfying the It\^{o} stochastic differential equation
\begin{equation}
d\left[\begin{array}{c}X_t \\ Y_t\end{array}\right]
  = \left[\begin{array}{c} f(X_t) \\ h(X_t)\end{array}\right]dt
    + \left[\begin{array}{cc} g(X_t) & 0 \\ 0 & 1\end{array}\right]dV_t,
      \label{eq:NLFsetup}
\end{equation}
where $Y_0=0$, $(V_t,t\ge 0)$ is a $d+1$-vector standard Brownian motion, independent
of $X_0$, and $f:\R^d\rightarrow\R^d$, $g:\R^d\rightarrow\R^{d\times d}$ and
$h:\R^d\rightarrow\R$ are suitably regular functions.  The nonlinear filter for $X$
computes, at each time $t$, the conditional probability distribution of $X_t$ given
the history of the {\em observations process} $(Y_s,0\le s\le t)$.  Since $X$ and $Y$
are jointly Markov the nonlinear filter can be expressed in a time-recursive manner.
Under suitable technical conditions, the observation-conditional distribution of
$X_t$ admits a density, $p_t$, (with respect to Lebesgue measure) satisfying the
{\em Kushner Stratonovich} stochastic partial differential equation \cite{crro1}
\begin{equation}
dp_t = \ca p_t\, dt
          + p_t(h-\hat{h}_t) d(Y_t-\hat{h}_tdt), \label{eq:kusteq}
\end{equation}
where $\ca$ is the Kolmogorov forward (Fokker-Planck) operator for $X$, and
$\hat{h}_t$ is the $(Y_s,0\le s\le t)$-conditional mean of $h(X_t)$.

The exponential Orlicz manifold was proposed as an ambient manifold for partial
differential equations of this type in \cite{brpi1} (and the earlier references
therein), and methods of projection onto submanifolds were developed.  Applications
of the Hilbert manifold of \cite{newt4} to nonlinear filtering were
developed in \cite{newt5,newt7}, and information-theoretic properties were
investigated.

It was argued in \cite{newt6,newt7} that statistical divergences such as the KL
divergence are natural measures of error for approximations to Bayesian conditional
distributions such as those of nonlinear filtering.  This is particularly so when
the approximation constructed is used to estimate a number statistics of the process
$X$, or when the dynamics of $X$ are significantly nonlinear.  We summarise these
ideas here since they motivate the developments that follow; details can be found
in \cite{newt7}.  If our purpose is to estimate a single real-valued variate
$v(X_t)\in L^2(\mu)$, then the estimate with the minimum mean-square error is the
conditional mean $\vbar_t:=\E_{\Pi_t}v=\Eout(v(X_t)|(Y_s,0\le s\le t))$, where
$\Eout$ is expectation with respect to $\PR$, and $\Pi_t$ is the conditional
distribution of $X_t$.  If the estimate is based on a $(Y_s,0\le s\le t)$-measurable
approximation to $\Pi_t$, $\Pihat_t$, then the mean-square error admits the orthogonal
decomposition
\begin{equation}
\Eout(v(X_t)-\E_{\Pihat_t}v)^2
   =  \Eout\E_{\Pi_t}(v-\vbar_t)^2 + \Eout(\vbar_t-\E_{\Pihat_t}v)^2.
      \label{eq:l2decom}
\end{equation}
The first term on the right-hand side here is the {\em statistical error}, and is
associated with the limitations of the observation $Y$; the second term is the
{\em approximation error} resulting from the use of $\Pihat_t$ instead of $\Pi_t$.
When comparing different approximations, it is appropriate to measure the second term
{\em relative to the first}; if $\vbar_t$ is a poor estimate of $v(X_t)$ then there
is no point in approximating it with great accuracy.  Maximising these {\em relative}
errors over all square-integrable variates leads to the (extreme) multi-objective
measure of mean-square approximation errors $\cd_{MO}(\Pihat_t|\Pi_t)$, where
\begin{equation}
\cd_{MO}(Q|P)
  := \half\sup_{v\in L^2(P)}\frac{(\E_Q v-\EP v)^2}{\EP(v-\EP v)^2}
   = \half\|dQ/dP-1\|_{L^2(P)}^2. \label{eq:chisqd}
\end{equation}
$\cd_{MO}$ is Pearson's $\chi^2$-divergence.  Although extreme, it illustrates an
important feature of multi-objective measures of error---they require probabilities
of events that are small to be approximated with greater absolute accuracy than
those that are large.  A less extreme multi-objective measure of mean-square errors
is developed in \cite{newt7}.  This constrains the functions $v$ of (\ref{eq:chisqd})
to have exponential moments.  The resulting measure of errors is shown to be of class
$C^1$ on the Hilbert manifold of \cite{newt4}, and so has this same property on the
manifolds developed here.  See \cite{newt7} for further discussion of these ideas. 

The paper is structured as follows.  Section \ref{se:modspa} provides the technical
background in mixed-norm weighted Sobolev spaces, where the $L^\lambda$ spaces are
based on a probability measure.  Section \ref{se:manifmt} constructs $(M,G,\phi)$,
a manifold of finite measures modelled on the general Sobolev space of section
\ref{se:modspa}. It outlines the properties of mixture and exponential representations
of measures on the manifold, as well as those of the KL divergence.  In doing so, it
defines the Fisher-Rao metric and Amari-Chentsov tensor.  Section \ref{se:gtil}
shows that a particular choice of mixed-norm Sobolev space is especially suited to
the manifold in the sense that the density of any $P\in M$ also belongs to the
model space, and the associated nonlinear superposition operator is continuous---a
rare property in the Sobolev context \cite{rusi1}.  Section \ref{se:fixnor} shows
that this property does not hold for fixed norm spaces, except in the special case
$G=W^{2,1}$.  It also develops a general class of fixed norm spaces, for which the
continuity property can be retained if the Lebesgue exponent in the range space is
suitably reduced.  Section \ref{se:manifm} develops an embedded submanifold of
{\em probability} measures $(M_0,G_0,\phi_0)$, in which the charts are {\em centred}
versions of $\phi$. Section \ref{se:NLF} outlines applications to the problem of
nonlinear filtering for a diffusion process, as defined in (\ref{eq:NLFsetup}) and
(\ref{eq:kusteq}).  Finally, section \ref{se:conclu} makes some concluding remarks,
discussing, in particular, a variant of the results that uses the Kaniadakis deformed
logarithm as a chart.

\section{The Model Spaces} \label{se:modspa} 

For some $t\in(0,2]$, let $\theta_t:[0,\infty)\rightarrow[0,\infty)$ be a strictly
increasing function that is twice continuously differentiable on $(0,\infty)$, such
that $\lim_{z\downarrow 0}\theta_t^\prime(z)<\infty$, and
\begin{equation}
\theta_t(z)
   = \left\{\begin{array}{ll}
     0         & {\rm if\ }z=0 \\
     c_t + z^t & {\rm if\ }z\ge z_t
     \end{array}\right\}, \quad{\rm where\ }z_t\ge 0,{\rm\ and\ }c_t\in\R.
     \label{eq:thetadef}
\end{equation}
If $t\in(1,2]$ then we also require $\theta_t$ and $-\sqrt{\theta_t}$ to be convex.

\begin{example} \label{ex:muex}
\begin{enumerate}
\item[(i)] Simple: $t\in[1,2]$ and $z_t=c_t=0$.
\item[(ii)] Smooth: $t\in(0,2]$, $z_t=2-t$, $c_t=\alpha_t(1-\cos(\beta_tz_t))-z_t^t$,
  and
  \begin{equation}
  \theta_t(z) = \alpha_t(1-\cos(\beta_tz)),\quad {\rm for\ }z\in[0,z_t].
                \label{eq:smoothet}
  \end{equation}
  Here, $\beta_tz_t$ is the unique solution in the interval $(0,\pi)$ of the equation
  \begin{equation}
  (t-1)\tan(\beta_tz_t) = \beta_tz_t, \label{eq:taneqn}
  \end{equation}
  and $\alpha_t\beta_t\sin(\beta_tz_t)=tz_t^{t-1}$.  (If $t=1$ then
  $\beta_t=\beta_tz_t=\pi/2$.)  The compound function
  $\R\ni z\mapsto\theta_t(|z|)\in\R$ is then of class $C^2$.
\end{enumerate}
\end{example}

For some $d\in\N$, let $\cx$ be the $\sigma$-algebra of Lebesgue measurable subsets
of $\R^d$, and let $\mu_t$ be the following product probability measure on
$(\R^d,\cx)$:
\begin{equation}
\mu_t(dx) = r_t(x)\,dx := \exp(l_t(x))dx, \ {\rm where\ \ }
            l_t(x):=\textstyle\sum_i (C_t-\theta_t(|x_i|)), \label{eq:rdef}
\end{equation}
and $C_t\in\R$ is such that $\int \exp(C_t-\theta_t(|z|))dz=1$.  In what follows, we
shall suppress the subscript $t$, and so $l_t$, $r_t$ and $\mu_t$ will become $l$,
$r$ and $\mu$, etc.

For any $1\le\lambda<\infty$, let $L^\lambda(\mu)$ be the Banach space of
(equivalence classes of) measurable functions $u:\R^d\rightarrow\R$ for which
$\|u\|_{L^\lambda(\mu)}:=(\int |u|^\lambda d\mu)^{1/\lambda}<\infty$. Let 
$C^\infty(\R^d;\R)$ be the space of continuous functions with continuous partial
derivatives of all orders, and let $C_0^\infty(\R^d;\R)$ be the subspace of
those functions having compact support.

For $k\in\N$, let $S:=\{0,\ldots, k\}^d$ be the set of $d$-tuples of integers in
the range $0\le s_i\le k$.  For $s\in S$, we define $|s|=\sum_is_i$, and denote by
$0$ the $d$-tuple for which $|s|=0$.  For any $0\le j\le k$,
$S_j:=\{s\in S:j\le |s|\le k\}$ is the set of $d$-tuples of weight at least $j$ and
at most $k$.  Let $\Lambda=(\lambda_0,\lambda_1,\ldots,\lambda_k)$, where
$1\le\lambda_k\le\lambda_{k-1}\le\cdots\le\lambda_0<\infty$,
and let $W^{k,\Lambda}(\mu)$ be the mixed-norm, weighted Sobolev
space comprising functions $a\in L^{\lambda_0}(\mu)$ that have weak partial
derivatives $D^sa\in L^{\lambda_{|s|}}(\mu)$, for all $s\in S_1$.  For
$a\in W^{k,\Lambda}(\mu)$ we define
\begin{equation} 
\|a\|_{W^{k,\Lambda}(\mu)}
  := \bigg(\sum_{s\in S_0}\|D^sa\|_{L^{\lambda_{|s|}}(\mu)}^{\lambda_0}
     \bigg)^{1/\lambda_0} < \infty.  \label{eq:asnorm}
\end{equation}
The following theorem is a variant of a standard result in the theory of fixed-norm,
unweighted Sobolev spaces.

\begin{theorem} \label{th:banspa}
The space $W^{k,\Lambda}(\mu)$ is a Banach space.
\end{theorem}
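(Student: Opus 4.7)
The plan is to follow the classical Meyers--Serrin/Sobolev completeness argument, adapted to the mixed-norm, weighted setting. First I would check the norm axioms: positivity, definiteness and homogeneity are immediate from the corresponding properties of each $L^{\lambda_{|s|}}(\mu)$; for the triangle inequality, linearity of weak differentiation gives $D^s(a+b)=D^sa+D^sb$, so applying the triangle inequality in each $L^{\lambda_{|s|}}(\mu)$ and then Minkowski's inequality in $\ell^{\lambda_0}$ on the indexing set $S_0$ yields $\|a+b\|_{W^{k,\Lambda}(\mu)}\le\|a\|_{W^{k,\Lambda}(\mu)}+\|b\|_{W^{k,\Lambda}(\mu)}$.

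For completeness, let $(a_n)$ be a Cauchy sequence in $W^{k,\Lambda}(\mu)$. For each fixed $s\in S_0$ the bound $\|D^sa_n-D^sa_m\|_{L^{\lambda_{|s|}}(\mu)}\le\|a_n-a_m\|_{W^{k,\Lambda}(\mu)}$ shows that $(D^sa_n)$ is Cauchy in $L^{\lambda_{|s|}}(\mu)$; by completeness of the Lebesgue spaces there exists $b_s\in L^{\lambda_{|s|}}(\mu)$ with $D^sa_n\to b_s$ in $L^{\lambda_{|s|}}(\mu)$. Set $a:=b_0$. The critical step is then to identify $b_s$ with the weak partial derivative $D^sa$, so that $a$ lies in $W^{k,\Lambda}(\mu)$.

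To do this, fix $\phi\in C_0^\infty(\R^d;\R)$. Because $\phi$ and $D^s\phi$ have compact support and the density $r=\exp(l)$ is continuous and strictly positive, both $\phi/r$ and $(D^s\phi)/r$ are bounded with compact support, hence lie in $L^{q}(\mu)$ for every $q\in[1,\infty]$, and in particular in the conjugate spaces $L^{\lambda_{|s|}/(\lambda_{|s|}-1)}(\mu)$ and $L^{\lambda_0/(\lambda_0-1)}(\mu)$. Writing $\int fg\,dx=\int f(g/r)\,d\mu$ and applying Hölder's inequality to $D^sa_n-b_s$ and $a_n-a$ respectively, I can pass to the limit in the integration-by-parts identity
\begin{equation}
\int (D^sa_n)\phi\,dx = (-1)^{|s|}\int a_n D^s\phi\,dx,
\end{equation}
obtaining $\int b_s\phi\,dx=(-1)^{|s|}\int aD^s\phi\,dx$ for every $\phi\in C_0^\infty(\R^d;\R)$. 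Hence $b_s=D^sa$ in the weak sense, so $a\in W^{k,\Lambda}(\mu)$ and, since $D^sa_n\to D^sa$ in $L^{\lambda_{|s|}}(\mu)$ for each $s\in S_0$, $a_n\to a$ in $W^{k,\Lambda}(\mu)$.

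The only non-routine point is the limit transfer step in the preceding paragraph, which is where the weighted nature of the space enters; everything else is formal. The potential obstacle is that weak derivatives are defined through test-function pairing under Lebesgue measure while the norm is measured against $\mu$, but the strict positivity and continuity of $r$ make the two frameworks completely interchangeable on test functions of compact support, so the argument goes through without difficulty.
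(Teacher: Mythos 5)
Your proposal is correct and follows essentially the same route as the paper: completeness of each $L^{\lambda_{|s|}}(\mu)$ gives candidate limits, and the weak-derivative identification is done by passing to the limit in the integration-by-parts identity after rewriting the $dx$-pairing as a $\mu$-pairing via $r^{-1}$, exploiting that the test function divided by $r$ is bounded with compact support (the paper uses a sup bound against the $L^1(\mu)$ norm where you use H\"older, which is the same estimate in substance).
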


\begin{proof}
That $\|\cdot\|_{W^{k,\Lambda}(\mu)}$ satisfies the axioms of a norm is easily
verified.  Suppose that $(a_n\in W^{k,\Lambda}(\mu))$ is a Cauchy sequence in this
norm; then, since the spaces $L^{\lambda_j}(\mu),\,0\le j\le k$ are all complete,
there exist functions $v_s\in L^{\lambda_{|s|}}(\mu), s\in S_0$ such that
$D^sa_n \rightarrow v_s {\rm\ in\ }L^{\lambda_{|s|}}(\mu)$. For any $s\in S_0$, and
any $\varphi\in C_0^\infty(\R^d;\R)$,
\begin{eqnarray}
\left|\int (D^sa_n-v_s)\varphi\,dx\right|
  & \le & \int |D^sa_n-v_s||\varphi|\,dx \nonumber \\
  & =  & \int |D^sa_n-v_s||\varphi|r^{-1}\mu(dx) \label{eq:weakder} \\
  & \le & \sup_{x\in\supp(\varphi)}(|\varphi|/r) \|D^sa_n-v_s\|_{L^1(\mu)}
          \rightarrow 0, \nonumber
\end{eqnarray}
and so
\[
\int v_s\varphi\,dx
  = \lim_n\int D^sa_n\varphi\,dx
  = (-1)^{|s|}\lim_n\int a_nD^s\varphi\,dx
  = (-1)^{|s|}\int v_0D^s\varphi\,dx,
\]
$v_0$ admits weak derivatives up to order $k$, and $D^sv_0=v_s$.  So
$W^{k,\Lambda}(\mu)$ is complete.
\end{proof}

The following developments show that functions in $W^{k,\Lambda}(\mu)$ can be
approximated by particular functions in $C^\infty(\R^d;\R)$ or $C_0^\infty(\R^d;\R)$.
For any $z\in(0,\infty)$, let $B_z:=\{x\in\R^d: |x|\le z\}$.  Let
$J\in C_0^\infty(\R^d;[0,\infty))$ be a function having the following properties:
(i) $\supp(J)=B_1$; (ii) $\int J\,dx = 1$.  For any $0<\epsilon<1$, let
$\Jeps(x)=\epsilon^{-d}J(x/\epsilon)$; then $\Jeps$ also has unit integral, but is
supported on $B_\epsilon$.  Since $l$ is bounded on bounded sets, any $u\in L^1(\mu)$
is also in $L_{{\rm loc}}^1(dx)$, and we can define the {\em mollified} version
$J_\epsilon\ast u\in C^\infty(\R^d;\R)$ as follows:
\begin{equation}
(\Jeps\ast u)(x)  := \int\Jeps(x-y)u(y)\,dy.   \label{eq:molver}
\end{equation}
For any $m\in\N$, let $\cu_m\subset L^1(\mu)$ comprise those functions
that take the value zero on the complement of $B_m$.  If $u\in\cu_m$ then
$J_\epsilon\ast u\in C_0^\infty(B_{m+1};\R)$.

\begin{lem} \label{le:molprop}
\begin{enumerate}
\item[(i)] For any $\lambda\in[1,\infty)$ and any $u\in\cu_m\cap L^\lambda(\mu)$,
  there exists an $\epsilon>0$ such that
  \begin{equation}
  \|\Jeps\ast u-u\|_{L^\lambda(\mu)}<1/m. \label{eq:Jumbnd}
  \end{equation}
\item[(ii)] For any $a\in W^{k,\Lambda}(\mu)$, $\epsilon>0$ and $s\in S_1$,
  $D^s(\Jeps\ast a)=\Jeps\ast (D^sa)$.
\end{enumerate}
\end{lem}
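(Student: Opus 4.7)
For part (i), the plan is to exploit the compact support of $u$ to reduce to the unweighted classical mollifier convergence. Since $u\in\cu_m$ vanishes outside $B_m$, and for $\epsilon<1$ the convolution $\Jeps\ast u$ is supported in $B_{m+1}$, the error $\Jeps\ast u-u$ also has support inside the compact set $B_{m+1}$. On this set the density $r=\exp(l)$ is continuous and strictly positive, hence bounded above and below by positive constants. Consequently the weighted norm $\|\cdot\|_{L^\lambda(\mu)}$ and the unweighted norm $\|\cdot\|_{L^\lambda(dx)}$ are equivalent on functions supported in $B_{m+1}$; in particular $u\in L^\lambda(dx)$. The classical mollifier approximation theorem then gives $\|\Jeps\ast u-u\|_{L^\lambda(dx)}\to 0$ as $\epsilon\downarrow 0$, and multiplying by $(\sup_{B_{m+1}} r)^{1/\lambda}$ and choosing $\epsilon$ sufficiently small yields the stated bound $\|\Jeps\ast u-u\|_{L^\lambda(\mu)}<1/m$.

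For part (ii), I would fix $x\in\R^d$ and observe that $y\mapsto\Jeps(x-y)$ lies in $C_0^\infty(\R^d;\R)$. Dominated convergence---justified by the smoothness and compact support of $\Jeps$ together with the local integrability of $a$ with respect to $dx$ (noted in the passage preceding the lemma)---permits differentiation under the integral sign, giving
\[
D^s(\Jeps\ast a)(x) = \int D_x^s\Jeps(x-y)\,a(y)\,dy = (-1)^{|s|}\int D_y^s\Jeps(x-y)\,a(y)\,dy,
\]
where the second equality is the chain rule $D_x\Jeps(x-y)=-D_y\Jeps(x-y)$ applied $|s|$ times. Since the weak derivative $D^s a$ is defined (as in the proof of Theorem~\ref{th:banspa}) by integration against $C_0^\infty(\R^d;\R)$ test functions with respect to Lebesgue measure, applying that definition to the test function $y\mapsto\Jeps(x-y)$ transfers the derivative from $\Jeps$ onto $a$ at the cost of a further factor of $(-1)^{|s|}$; the two signs cancel, leaving $\int\Jeps(x-y)D^s a(y)\,dy=(\Jeps\ast D^s a)(x)$.

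Neither part presents a serious obstacle, since both are adaptations of standard mollifier arguments. The only subtlety is the presence of the weight $r$, and this is sidestepped in part (i) by the compactness of supports, and in part (ii) by the fact that the weak derivative in $W^{k,\Lambda}(\mu)$ is defined via $C_0^\infty(\R^d;\R)$ test functions integrated against unweighted Lebesgue measure---so the weight plays no role in the weak-derivative calculus.
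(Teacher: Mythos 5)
Your proposal is correct, but it reaches the conclusion by a somewhat different route than the paper, most noticeably in part (i). There you localise everything to the compact set $B_{m+1}$, use the two-sided bounds on the continuous positive weight $r$ there to pass between $\|\cdot\|_{L^\lambda(\mu)}$ and $\|\cdot\|_{L^\lambda(dx)}$, and then quote the classical unweighted mollifier convergence theorem; this is shorter and entirely sufficient, since the lemma only concerns $u\in\cu_m$, whose support (and that of $\Jeps\ast u$) is compact. The paper instead works directly in the weighted norm: it uses Jensen's inequality together with the uniform continuity of $l$ near the support to obtain the operator-type bound $\|\Jeps\ast u\|_{L^\lambda(\mu)}\le 2\|u\|_{L^\lambda(\mu)}$ for small $\epsilon$, then approximates $u$ in $L^\lambda(\mu)$ by some $\varphi\in C_0^\infty(B_{2m};\R)$ and controls $\Jeps\ast\varphi-\varphi$ by uniform continuity. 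What the paper's route buys is the quantitative stability of mollification in the weighted norm itself (uniform over all $u$ supported in $B_m$), which is the kind of estimate one would need without compact support or when tracking constants; what your route buys is brevity, replacing that machinery by the equivalence of norms on a fixed ball. For part (ii) the two arguments are essentially the standard pair of proofs of the same fact: you differentiate under the integral sign (justified by dominated convergence, using $a\in L_{\rm loc}^1(dx)$ and the smoothness and compact support of $\Jeps$) and then apply the definition of the weak derivative to the test function $y\mapsto\Jeps(x-y)$, with the two factors of $(-1)^{|s|}$ cancelling, thereby identifying the classical derivative of the smooth function $\Jeps\ast a$ pointwise; the paper verifies the identical identity in weak form, testing against $\varphi\in C_0^\infty(\R^d;\R)$ and using Fubini plus integration by parts $|s|$ times. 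Both are sound, and nothing downstream in the paper depends on the internal estimates of the paper's proof rather than on the statement of the lemma, so your argument serves equally well.
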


\begin{proof}
It follows from Jensen's inequality that, for any $\lambda\in[1,\infty)$,
\[
|(\Jeps\ast u)(x)|^\lambda \le (\Jeps\ast|u|^\lambda)(x)
  =   \int\Jeps(x-y)|u(y)|^\lambda r(y)^{-1}\mu(dy).
\]
Since $l$ is uniformly continuous on $B_{2m+1}$, there exists an $\alpha_m>0$
such that $|l(x)-l(y)|\le\lambda\log 2$ for all $y\in B_{2m}$, $|x-y|\le\alpha_m$.
So, for any $0<\epsilon<\alpha_m$,
\begin{eqnarray}
\|\Jeps\ast u\|_{L^\lambda(\mu)}^\lambda
  & \le & \int\int \Jeps(x-y)|u(y)|^\lambda\exp(l(x)-l(y))\mu(dy)dx \nonumber \\
          [-1.5ex] \label{eq:molbnd} \\ [-1.5ex]
  & \le & 2^\lambda\int\int \Jeps(x-y)dx|u(y)|^\lambda\mu(dy)     
          = 2^\lambda\|u\|_{L^\lambda(\mu)}^\lambda. \nonumber
\end{eqnarray}
It is a standard result that there exists a $\varphi\in C_0^\infty(B_{2m};\R)$ such
that $\|u-\varphi\|_{L^\lambda(\mu)}<1/6m$, which together with (\ref{eq:molbnd})
shows that, for any $0<\epsilon<\alpha_m$,
$\|\Jeps\ast u-\Jeps\ast\varphi\|_{L^\lambda(\mu)}<1/3m$.
Furthermore,
\[
|(\Jeps\ast\varphi)(x)-\varphi(x)|
  \le \int\Jeps(x-y)|\varphi(y)-\varphi(x)|\,dy 
  \le \sup_{|x-y|\le\epsilon}|\varphi(y)-\varphi(x)|.
\]
Since $\varphi$ is uniformly continuous, there exists a $\beta_u>0$ such that, for
any $0<\epsilon<\beta_u$ and all $x$, $|(\Jeps\ast\varphi)(x)-\varphi(x)| < 1/3m$.
We can now choose $0<\epsilon<\min\{\alpha_m,\beta_u\}$, which completes the
proof of part (i).

For $a$ and $s$ as in part (ii), and any $\varphi\in C_0^\infty(\R^d;\R)$,
\begin{eqnarray*}
\int (\Jeps\ast a)(x)D^s\varphi(x)\,dx
  & = & \int\int\Jeps(y)a(x-y)\,dyD^s\varphi(x)\,dx \\
  & = & \int\int a(x-y)D^s\varphi(x)\,dx \Jeps(y)\,dy \\
  & = & (-1)^{|s|}\int\int D^sa(x-y)\varphi(x)\,dx\Jeps(y)\,dy \\
  & = & (-1)^{|s|}\int(\Jeps\ast D^sa)(x)\varphi(x)\,dx,
\end{eqnarray*}
where we have used integration by parts $|s|$ times in the third step.  This
completes the proof of part (ii).
\end{proof}

For ease of notation in what follows, we shall abbreviate $\Jeps\ast u$ to $Ju$,
where it is understood that $\epsilon$ has been chosen as in part (i).  With
this convention, we can express part (ii) as $D^s(Ja)=J(D^sa)$, where it is
understood that $\epsilon$ has been chosen to satisfy (\ref{eq:Jumbnd}) for both
$a$ and $D^sa$.

For any $a\in W^{k,\Lambda}(\mu)$ and $m\in\N$, let $a_m(x):=a(x)\rho(x/m)$, where
$\rho\in C_0^\infty(\R^d;\R)$ is such that
\begin{equation}
\rho(x) = 1 {\rm \ \ if\ }|x|\le 1/2 \quad{\rm and}\quad
\rho(x) = 0 {\rm \ \ if\ }|x|\ge 1. \label{eq:rhodef}
\end{equation}

\begin{lem} \label{le:monoleb}
  $Ja_m\rightarrow a$ in $W^{k,\Lambda}(\mu)$,
  and so $C_0^\infty(\R^d;\R)$
  is dense in $W^{k,\Lambda}(\mu)$.
\end{lem}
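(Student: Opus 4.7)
The plan is to prove $\|Ja_m - a\|_{W^{k,\Lambda}(\mu)} \to 0$ by splitting via the triangle inequality into $\|Ja_m - a_m\|_{W^{k,\Lambda}(\mu)} + \|a_m - a\|_{W^{k,\Lambda}(\mu)}$ and sending each piece to zero separately.

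First I would show $a_m \to a$ in $W^{k,\Lambda}(\mu)$. Since $\rho(\cdot/m)\in C^\infty_0(\R^d;\R)$ has bounded derivatives of every order, Leibniz's rule is valid in the weak sense and gives, for each $s\in S_0$,
\[
D^s a_m(x) = \sum_{s'\le s}\binom{s}{s'} m^{-|s-s'|}(D^{s-s'}\rho)(x/m)\,D^{s'}a(x).
\]
The leading term ($s'=s$) equals $\rho(x/m)D^s a(x)$; it is dominated by $\|\rho\|_\infty|D^s a|\in L^{\lambda_{|s|}}(\mu)$ and converges pointwise a.e.\ to $D^s a$, hence in $L^{\lambda_{|s|}}(\mu)$ by dominated convergence. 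For each lower-order term ($|s'|<|s|$), the prefactor $m^{-|s-s'|}\|D^{s-s'}\rho\|_\infty$ vanishes as $m\to\infty$, while $D^{s'}a\in L^{\lambda_{|s'|}}(\mu)\subset L^{\lambda_{|s|}}(\mu)$ (the inclusion following from $\lambda_{|s|}\le\lambda_{|s'|}$ and $\mu$ being a probability measure, by Jensen's inequality). Summing over $s\in S_0$ yields $a_m\to a$ in $W^{k,\Lambda}(\mu)$.

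Next, for each fixed $m$ I would control $\|Ja_m - a_m\|_{W^{k,\Lambda}(\mu)}$. Every term of the above Leibniz expansion carries a factor of $\rho$ or one of its derivatives evaluated at $x/m$, which vanishes for $|x|\ge m$; hence $D^s a_m\in\cu_m\cap L^{\lambda_{|s|}}(\mu)$ for each $s\in S_0$. Applying Lemma \ref{le:molprop}(i) to the finitely many functions $\{D^s a_m : s\in S_0\}$ and taking the minimum of the resulting $\epsilon$'s produces a single $\epsilon>0$ with
\[
\|J_\epsilon \ast D^s a_m - D^s a_m\|_{L^{\lambda_{|s|}}(\mu)} < 1/m \quad\text{for every }s\in S_0.
\]
Lemma \ref{le:molprop}(ii) then identifies $D^s(J_\epsilon a_m)$ with $J_\epsilon\ast D^s a_m$, so $\|Ja_m - a_m\|_{W^{k,\Lambda}(\mu)}^{\lambda_0}\le |S_0|/m^{\lambda_0}\to 0$. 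Combining with the previous paragraph gives $Ja_m\to a$, and since $Ja_m\in C_0^\infty(B_{m+1};\R)\subset C_0^\infty(\R^d;\R)$, density follows.

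I do not anticipate a serious obstacle: the argument is a two-layer approximation (cut off, then mollify) of a standard Sobolev kind. The points requiring care are the weak-sense validity of Leibniz's rule when multiplying a $W^{k,\Lambda}(\mu)$ function by the smooth cutoff $\rho(\cdot/m)$, and the uniform choice of $\epsilon$ across the finite family $S_0$; the latter is handled simply by taking a minimum, and the $L^p$-nesting afforded by $\mu$ being a probability measure is what allows the mixed-norm structure to be absorbed without extra work.
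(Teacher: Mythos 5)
Your proof is correct and takes essentially the same route as the paper's: cut off with $\rho(\cdot/m)$, show $a_m\to a$ in $W^{k,\Lambda}(\mu)$ via the Leibniz rule, the nesting of the $L^{\lambda_j}(\mu)$ spaces and dominated convergence, then mollify with a single $\epsilon$ chosen (using the finiteness of $S_0$) so that Lemma \ref{le:molprop} applies to every $D^sa_m$. The only difference is presentational: the paper disposes of the whole Leibniz sum with one dominated-convergence argument, whereas you treat the lower-order terms separately through the vanishing $m^{-|s-\sigma|}$ prefactor, which does not change the substance.
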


\begin{proof}
Since $S_0$ is finite we may choose $\epsilon>0$ such that
(\ref{eq:Jumbnd}) is satisfied for all $u=D^sa_m$ with $s\in S_0$ and
$\lambda=\lambda_{|s|}$.  According to the Leibniz rule,
\begin{equation}
D^s a_m
  = \sum_{\sigma\le s} m^{-|s-\sigma|} D^\sigma aD^{s-\sigma}\rho 
    \prod_{1\le i\le d}\left(\begin{array}{c} s_i \\ \sigma_i\end{array}\right),
    \label{eq:Dsfrho}
\end{equation}
and so $|D^s a_m|\le K \sum_\sigma|D^\sigma a|\in L^{\lambda_{|s|}}(\mu)$.
Since $D^sa_m\rightarrow D^sa$ for all $x$, it follows from the dominated convergence
theorem that it also converges in $L^{\lambda_{|s|}}(\mu)$.
Lemma \ref{le:molprop} completes the proof.
\end{proof}

\begin{remark} \label{re:hilbert}
If $\lambda_j=2$ for $0\le j\le k$ then $H^k(\mu):=W^{k,\Lambda}(\mu)$ is a Hilbert
Sobolev space with inner product
\begin{equation}
\langle a, b\rangle_H
  = \sum_{s\in S_0}\langle D^sa, D^sb\rangle_{L^2(\mu)}. \label{eq:hildot}
\end{equation}
\end{remark}

\section{The Manifolds of Finite Measures} \label{se:manifmt}

In this section, we construct manifolds of finite measures on $(\R^d,\cx)$ modelled
on the Sobolev spaces of section \ref{se:modspa}.  The charts of the manifolds are
based on the ``deformed logarithm'' $\log_d:(0,\infty)\rightarrow\R$, defined by
\begin{equation}
\log_dy = y-1 + \log y.  \label{eq:deflog}
\end{equation}
Now $\inf_y\log_d y=-\infty$, $\sup_y\log_d y=+\infty$, and
$\log_d\in C^\infty((0,\infty);\R)$ with strictly positive first derivative
$1+y^{-1}$, and so, according to the inverse function theorem,
$\log_d$ is a diffeomorphism from $(0,\infty)$ onto $\R$.  Let $\psi$ be its inverse.
This can be thought of as a ``deformed exponential'' function \cite{naud1}.  We
use $\psi^{(n)}$ to denote its $n$-th derivative and, for convenience, set
$\psi^{(0)}:=\psi$.

\begin{lem} \label{le:psidif}
\begin{enumerate}
\item[(i)] For any $n\in\N$:
  \begin{equation}
  (1+\psi)\psi^{(n)}
    = \psi^{(n-1)} - \half\sum_{j=1}^{n-1}
        \left(\begin{array}{c}n \\ j\end{array}\right)\psi^{(j)}\psi^{(n-j)};
        \label{eq:psirec}
\end{equation}
in particular $\psi^{(1)}=\psi/(1+\psi)>0$ and $\psi^{(2)}=\psi/(1+\psi)^3>0$, and
so $\psi$ is strictly increasing and convex.
\item[(ii)] For any $n\ge 2$,
\begin{equation}
\psi^{(n)} = \frac{Q_{n-2}(\psi)}{(1+\psi)^{2(n-1)}}\psi^{(1)},  \label{eq:psidrec}
\end{equation}
where $Q_{n-2}$ is a polynomial of degree no more than $n-2$.  In particular,
$\psi^{(n)}$, $\psi^{(n)}/\psi$ and $\psi^{(n)}/\psi^{(1)}$ are all bounded.
\end{enumerate}
\end{lem}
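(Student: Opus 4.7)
The plan is as follows. For part (i), I start from the defining identity $\log_d(\psi(x))=x$ and differentiate once, using $(\log_d)'(y)=1+1/y$, to obtain $(1+\psi)\psi^{(1)}=\psi$; this is already the $n=1$ case of (\ref{eq:psirec}). I then apply Leibniz's rule to this product identity $n-1$ further times. Since $(1+\psi)^{(j)}=\psi^{(j)}$ for $j\ge 1$, the result is
\[
(1+\psi)\psi^{(n)}+\sum_{j=1}^{n-1}\binom{n-1}{j}\psi^{(j)}\psi^{(n-j)}=\psi^{(n-1)}.
\]
To recast the raw Leibniz coefficient into the symmetric form $\frac12\binom{n}{j}$ claimed in (\ref{eq:psirec}), I average the sum with its image under the involution $j\mapsto n-j$, which leaves $\psi^{(j)}\psi^{(n-j)}$ invariant and, by Pascal's identity, converts $\binom{n-1}{j}+\binom{n-1}{j-1}$ into $\binom{n}{j}$. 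The displayed expressions for $\psi^{(1)}$ and $\psi^{(2)}$ are the $n=1,2$ cases, and strict monotonicity and convexity of $\psi$ follow at once since $\psi$ takes values in $(0,\infty)$.

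For part (ii) I induct on $n\ge 2$. The base case is immediate from part (i): $\psi^{(2)}=\psi^{(1)}/(1+\psi)^2$, so $Q_0\equiv 1$. For the inductive step, I solve (\ref{eq:psirec}) for $\psi^{(n)}$ and substitute the inductive expression $\psi^{(k)}=Q_{k-2}(\psi)(1+\psi)^{-2(k-1)}\psi^{(1)}$ (valid for $2\le k<n$), together with $\psi^{(1)}=\psi/(1+\psi)$, into each term on the right. The crucial algebraic identity is $(\psi^{(1)})^2=\psi\psi^{(1)}/(1+\psi)$, which converts every cross product $\psi^{(j)}\psi^{(n-j)}$ into a single factor of $\psi^{(1)}$ multiplied by a rational function of $\psi$. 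Tracking denominators shows every resulting term collects exactly at $(1+\psi)^{2(n-1)}$, and tracking numerator degrees shows each polynomial in $\psi$ has degree at most $n-2$; summing yields (\ref{eq:psidrec}).

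The three boundedness claims then reduce, via (\ref{eq:psidrec}), to the boundedness of functions of $\psi$ of the form $Q(\psi)(1+\psi)^{-m}$ with $\deg Q\le m$ (possibly multiplied by $\psi/(1+\psi)$, which is itself in $(0,1)$). Any such rational function is continuous on $[0,\infty)$ with finite limits at $0$ and at $\infty$, hence bounded, and composition with the continuous $\psi:\R\to(0,\infty)$ preserves boundedness.

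I expect the only real obstacle to be the bookkeeping in the inductive step of part (ii): simultaneously tracking the power of $(1+\psi)$ in each denominator, the degree of each polynomial numerator in $\psi$, and the extracted factor of $\psi^{(1)}$. Once the identity $(\psi^{(1)})^2=\psi\psi^{(1)}/(1+\psi)$ is in hand, the remaining computation is mechanical.
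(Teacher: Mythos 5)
Your proof is correct and is essentially the paper's own argument in expanded form: the paper merely states that $\psi^{(1)}$ and $\psi^{(2)}$ follow by a straightforward computation and that (\ref{eq:psirec}) and (\ref{eq:psidrec}) follow by induction. Your Leibniz differentiation of $(1+\psi)\psi^{(1)}=\psi$ together with the $j\mapsto n-j$ symmetrization, and the identity $(\psi^{(1)})^2=\psi\psi^{(1)}/(1+\psi)$ driving the degree/denominator bookkeeping in the inductive step for (ii), supply precisely the details the paper leaves implicit.
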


\begin{proof}
That $\psi^{(1)}$ and $\psi^{(2)}$ are as stated is verified by a straightforward
computation.  Both (\ref{eq:psirec}) and (\ref{eq:psidrec}) then follow by induction
arguments.
\end{proof}

Let $G:=W^{k,\Lambda}(\mu)$ be the general mixed-norm space of section \ref{se:modspa},
and let $M$ be the set of finite measures on $(\R^d,\cx)$ satisfying the following:
\begin{enumerate}
\item[(M1)] $P$ is mutually absolutely continuous with respect to $\mu$;
\item[(M2)] $\log_d p\in G$;
\end{enumerate}
(We denote measures on $(\R^d,\cx)$ by the upper-case letters $P$, $Q$, \ldots,
and their densities with respect to $\mu$ by the corresponding lower case letters,
$p$, $q$, \ldots)  In order to control both the density $p$ and its log, we employ
the ``balanced'' chart of \cite{newt4} and \cite{newt6},
$\phi:M\rightarrow G$.  This is defined by:
\begin{equation}
\phi(P) = \log_dp = p-1 + \log p.  \label{eq:phitdef}
\end{equation}

\begin{prop} \label{pr:biject}
$\phi$ is a bijection onto $G$.
\end{prop}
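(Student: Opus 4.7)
The plan is to dispatch injectivity immediately and then reduce surjectivity to a single integrability bound, which is the only nontrivial point.

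For injectivity, since $\log_d$ is strictly increasing on $(0,\infty)$ (its derivative $1+y^{-1}$ is strictly positive), the assignment $y\mapsto\log_d y$ is pointwise invertible. Hence $\phi(P)=\phi(Q)$ forces $p=q$ $\mu$-a.e., and therefore $P=Q$ as measures on $(\R^d,\cx)$.

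For surjectivity, given $a\in G$ I will define $p:=\psi(a)$, which is a strictly positive measurable function, and let $P$ be the measure $dP=p\,d\mu$. Condition (M2) then holds tautologically, since $\log_d p = \log_d\psi(a) = a\in G$. Condition (M1) holds because $p>0$ everywhere makes $P$ mutually absolutely continuous with $\mu$. The only remaining issue is to show that $P$ is \emph{finite}, i.e.\ that $\psi(a)\in L^1(\mu)$.

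This is where I will invoke the linear growth of $\psi$. By Lemma \ref{le:psidif}(i), $\psi^{(1)}=\psi/(1+\psi)\in(0,1)$, so $\psi$ is globally Lipschitz with constant at most $1$. Combined with $\psi(0)=1$ (since $\log_d 1 = 0$), the mean value theorem gives the pointwise bound
\[
\psi(a(x)) \le 1 + |a(x)|.
\]
Because $\mu$ is a probability measure and $\lambda_0\ge 1$, the continuous embeddings $G\hookrightarrow L^{\lambda_0}(\mu)\hookrightarrow L^1(\mu)$ apply, yielding
\[
\int \psi(a)\,d\mu \;\le\; 1 + \|a\|_{L^1(\mu)} \;<\;\infty,
\]
so $P$ is a finite measure and lies in $M$.

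There is no real obstacle in the argument; the conceptual content lies entirely in the linear growth of the deformed exponential $\psi$, which is exactly the feature distinguishing $\log_d$ from $\log$ and making the balanced chart compatible with a Sobolev model space without any exponential-integrability hypothesis on $a$.
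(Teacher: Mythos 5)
Your proof is correct and follows essentially the same route as the paper: surjectivity is reduced to the integrability of $\psi(a)$, which the paper likewise obtains from the boundedness of $\psi^{(1)}$ (i.e.\ the linear growth you spell out via $\psi(a)\le 1+|a|$ and $G\hookrightarrow L^1(\mu)$), together with strict positivity of $\psi$ for (M1) and $\log_d\psi(a)=a$ for (M2). The only difference is cosmetic: you make injectivity explicit, whereas the paper leaves it implicit in the fact that $\log_d$ is a bijection from $(0,\infty)$ onto $\R$.
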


\begin{proof}
It follows from (M2) that, for any $P\in M$, $\phi(P)\in G$. Suppose, conversely,
that $a\in G$; then since $\psi^{(1)}$ is bounded, $\psi(a)\in L^1(\mu)$,
and so defines a finite measure $P(dx)=\psi(a(x))\mu(dx)$.  Since $\psi$ is strictly
positive, $P$ satisfies (M1).  That it also satisfies (M2) follows from the fact that
$\log_d\psi(a)=a\in G$.  We have thus shown that $P\in M$ and clearly $\phi(P)=a$.
\end{proof}

The inverse map $\phi^{-1}:G\rightarrow M$ takes the form
\begin{equation}
p(x) = \frac{d\phi^{-1}(a)}{d\mu}(x) = \psi(a(x)). \label{eq:phitinv}
\end{equation}
In \cite{newt4,newt6}, tangent vectors were defined as equivalence classes of
differentiable curves passing through a given base point, and having the same first
derivative at this point.  This allowed them to be interpreted as linear operators
acting on differentiable maps.  Here, we use a different definition that is closer
to that of membership of $M$. For any $P\in M$, let $\Ptil_a$ be the finite measure
on $(\R^d,\cx)$ with density $\ptil_a=\psi^{(1)}(a)$, where $a=\phi(P)$. 
($\Ptil_a\ll\mu$ since $\psi^{(1)}$ is bounded.)  We define a tangent vector $U$
at $P$ to be a {\em signed} measure on $(\R^d,\cx)$ that is absolutely continuous
with respect to $\Ptil_a$, with Radon-Nikodym derivative $dU/d\Ptil_a\in G$.  The
tangent space at $P$ is the linear space of all such measures, and the tangent bundle
is the disjoint union $TM:=\cup_{P\in M}(P,T_PM)$.  This is globally trivialised by
the chart
$\Phi:TM\rightarrow G\times G$, where
\begin{equation}
\Phi(P,U) = (\phi(P),dU/d\Ptil_a). \label{eq:bunchartt}
\end{equation}

The derivative of a (Fr\'{e}chet) differentiable, Banach-space-valued map
$f:M\rightarrow \bY$ (at $P$ and in the ``direction'' $U$) is defined in the obvious
way:
\begin{equation}
Uf = (f\circ\phi^{-1})_a^{(1)}u, \quad{\rm where\ }(a,u)=\Phi(P,U). \label{eq:tanvec}
\end{equation}
Clearly $u=U\phi$.  We shall also need a weaker notion of differentiability due to
Leslie \cite{lesl1,lesl2}.  Let $A:G\rightarrow Y$ be a continuous linear map
and, for fixed $a=\phi(P)\in G$, let $R:\R\times G\rightarrow Y$ be defined by
\[
R(y,u)
  = \left\{\begin{array}{ll}
      y^{-1}\left(f\circ\phi^{-1}(a+yu)-f\circ\phi^{-1}(a)\right)
      - Au & {\rm if\ }y\neq 0, \\
      0 & {\rm if\ }y=0. \end{array} \right.
\]
If $R$ is continuous at $(0,u)$ for all $u\in G$, then we say that $f$
is {\em Leslie differentiable} at $P$, with derivative
\begin{equation}
Uf = d(f\circ\phi^{-1})_au = Au. \label{eq:lesdif}
\end{equation}
If $f$ is Leslie differentiable at all $P\in M$ then we say that it is
Leslie differentiable.  This is a slightly stronger property than the
``$d$-differentiability'' used in \cite{newt4}, which essentially demands continuity
of $R$ in the first argument only.

The construction above defines an
infinite-dimensional manifold of finite measures, $(M,G,\phi)$, with atlas comprising
the single chart $\phi$.  $M$ is a subset of an instance of the manifold constructed
in \cite{newt6} (that in which the measurable space $\bX$ of \cite{newt6} is $\R^d$),
but has a stronger topology than the associated relative topology.  Results in
\cite{newt6} concerning the smoothness of maps defined on the model space
$L^{\lambda_0}(\mu)$ are true {\em a-forteriori} when the latter is replaced by $G$;
in fact, stronger results can be obtained under the following hypothesis:
\begin{enumerate}
\item[(E1)] $t\in(1,2]$ and $\lambda_1=\lambda_0$.
\end{enumerate}

For some $1\le\beta\le\lambda_0$, let $\Psi_\beta:G\rightarrow L^\beta(\mu)$ be the
nonlinear superposition (Nemytskii) operator defined by $\Psi_\beta(a)(x)=\psi(a(x))$.
(See \cite{apza1}.)

\begin{lem} \label{le:superp}
\begin{enumerate}
\item[(i)] $\Psi_\beta\in C^N(G;L^\beta(\mu))$, where
  \begin{equation}
  N = N(\lambda_0,\lambda_1,\beta,t)
    := \left\{\begin{array}{ll}
        \lceil\lambda_0/\beta\rceil-1 & {\it if\ (E1)\ does\ not\ hold,}  \\
        \lfloor\lambda_0/\beta\rfloor & {\it if\ (E1)\ holds.}\end{array}\right.
        \label{eq:Ndef}
  \end{equation}
  For any $1\le j\le N$, $\Psi_\beta^{(j)}:G\rightarrow L(G^j;\,L^\beta(\mu))$ is
  as follows
  \begin{equation}
  \Psi_{\beta,a}^{(j)}(u_1,\ldots,u_j)(x) = \psi^{(j)}(a(x))u_1(x)\cdots u_j(x).
     \label{eq:Xibetder1}
  \end{equation}
\item[(ii)]  If $\lambda_0/\beta\in\N$ and (E1) does not hold, then the highest
  Fr\'{e}chet derivative, $\Psi_\beta^{(N)}$, is Leslie differentiable, with
  derivative
  \begin{equation}
  (d\Psi_{\beta,a}^{(N)}u_{N+1})(u_1,\ldots,u_N)(x)
     = \psi^{(N+1)}(a(x))u_1(x)\cdots u_{N+1}(x). \label{eq:Xibetder2}
  \end{equation}
\item[(iii)] $\Psi_\beta$ satisfies global Lipschitz continuity and linear growth
  conditions, and all its derivatives (including that in (\ref{eq:Xibetder2})) are
  globally bounded.
\end{enumerate}
\end{lem}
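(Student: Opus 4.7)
Part (iii) will follow directly from Lemma~\ref{le:psidif}. The bounds $0 < \psi^{(1)} < 1$ and $\psi(0) = 1$ make $\psi$ globally 1-Lipschitz with $\psi(z) \leq 1 + |z|$ for all $z$, and since $\mu$ is a probability measure with $G \hookrightarrow L^{\lambda_0}(\mu) \hookrightarrow L^\beta(\mu)$, these pointwise bounds transfer to the claimed $L^\beta(\mu)$ Lipschitz and linear-growth estimates. Global boundedness of every $\psi^{(n)}$, also from Lemma~\ref{le:psidif}(ii), gives boundedness of the derivatives once their form is established in (i).

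The strategy for (i) is induction on $j$. At each step I must show (a) the candidate map $(u_1,\ldots,u_j) \mapsto \psi^{(j)}(a) u_1\cdots u_j$ is a bounded multilinear operator $G^j \to L^\beta(\mu)$, (b) it coincides with the Fr\'echet derivative of $\Psi_\beta^{(j-1)}$ at $a$, and (c) the map $a \mapsto \Psi_\beta^{(j)}(a)$ is continuous into $L(G^j;L^\beta(\mu))$. For (a), Hölder's inequality places a product of $j$ factors from $L^{\lambda_0}(\mu)$ into $L^{\lambda_0/j}(\mu) \hookrightarrow L^\beta(\mu)$ whenever $j \leq \lambda_0/\beta$. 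For (b), I would apply Taylor's theorem pointwise to $\psi^{(j-1)}$ and then split the resulting quantity via a generalised Hölder inequality, placing the Nemytskii remainder into $L^r(\mu)$ with $1/r = 1/\beta - j/\lambda_0$ and each $u_i$ into $L^{\lambda_0}(\mu)$. Since $\psi^{(j)}$ is bounded and continuous, its Nemytskii lift is continuous from $G$ into every $L^r(\mu)$ with $r < \infty$ (via convergence in measure plus dominated convergence), which yields the required $o(\|v\|_G)$ bound. The same Nemytskii-continuity argument handles (c). This loop closes whenever $j < \lambda_0/\beta$, i.e.~$j \leq \lceil\lambda_0/\beta\rceil - 1$.

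The main obstacle is the borderline case $j = \lambda_0/\beta \in \N$: Hölder then forces $r = \infty$, and $\psi^{(j)}$ is not globally uniformly continuous. Hypothesis (E1) bypasses this via the log-Sobolev embedding advertised in the abstract, upgrading $G \hookrightarrow L^{\lambda_0}(\mu)$ to $G \hookrightarrow L^q(\mu)$ for some $q > \lambda_0$; the Hölder accounting then yields $1/r = 1/\beta - j/q > 0$ and extends the induction to $j = \lfloor\lambda_0/\beta\rfloor$.

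For (ii), with $N = \lambda_0/\beta - 1$ and (E1) absent, the candidate in (\ref{eq:Xibetder2}) is still a bounded $(N+1)$-linear map since $\lambda_0/(N+1) = \beta$ on the nose. Leslie differentiability only requires that $R(y',v') \to 0$ in $L(G^N;L^\beta(\mu))$ as $(y',v') \to (0,v)$ for each fixed $v \in G$. Taylor expansion of $\psi^{(N)}$ writes the remainder as $v' \cdot h(y',v') \cdot u_1 \cdots u_N$ with $h(y',v') = \int_0^1 [\psi^{(N+1)}(a + s y' v') - \psi^{(N+1)}(a)]\,ds$ globally bounded and tending to $0$ in measure as $y'v' \to 0$ in $L^{\lambda_0}(\mu)$. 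Crucially, $v' \to v$ in $L^{\lambda_0}(\mu)$ makes $\{|v'|^{\lambda_0}\}$ uniformly integrable, so Vitali's convergence theorem gives $v' h \to 0$ in $L^{\lambda_0}(\mu)$; an exact-exponent Hölder estimate with the $u_i$'s in $L^{\lambda_0}(\mu)$ then delivers the required vanishing of the $L(G^N;L^\beta(\mu))$-norm of $R(y',v')$.
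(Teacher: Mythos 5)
Your handling of the non-borderline induction steps in part (i), and of parts (ii) and (iii), is essentially the paper's own argument: a mean-value/Taylor remainder, H\"{o}lder's inequality placing the increment and each $u_i$ in $L^{\lambda_0}(\mu)$, and bounded convergence (your Vitali variant in (ii) is equivalent) applied to a bounded Nemytskii remainder converging in measure. The genuine gap is the borderline case $j=\lambda_0/\beta\in\N$ under (E1), which is the entire content of the distinction between $\lceil\lambda_0/\beta\rceil-1$ and $\lfloor\lambda_0/\beta\rfloor$. You dispose of it by asserting that (E1) upgrades $G\hookrightarrow L^{\lambda_0}(\mu)$ to $G\hookrightarrow L^{q}(\mu)$ for some $q>\lambda_0$. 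No such upgrade exists: for weights $r\propto\exp(-\sum_i\theta_t(|x_i|))$ with $t\in(1,2]$ (the Gaussian case $t=2$ being the archetype), first-order Sobolev regularity improves integrability only on the logarithmic scale, and $W^{1,\lambda_0}(\mu)$ does not embed into $L^q(\mu)$ for any $q>\lambda_0$. What is actually available, and what the paper uses, is a log-Sobolev embedding of the form $\left\||b|^{\lambda_0}\right\|_{L^1\log^\alpha L(\mu)}\le K\|b\|_G^{\lambda_0}$ with $\alpha=(t-1)/t$, obtained by first proving that $|b|^{\lambda_0}$ is weakly differentiable with derivative in $L^1(\mu)$ (via smooth approximation) and then invoking Theorem 7.12 of Cianchi--Pick--Slav\'{i}kov\'{a}. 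The estimate is then closed not by ordinary H\"{o}lder with a finite exponent $r$, but by the generalised H\"{o}lder inequality pairing $L^1\log^\alpha L(\mu)$ with the complementary Orlicz space $\exp L^{1/\alpha}(\mu)$, in which the bounded remainders $\delta_n$ and $\Gamma_n$, converging to zero in probability, converge to zero in norm. Without this (or some substitute), the extra derivative claimed under (E1), and the continuity of the top derivative in that case, are not established.

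A symptom of the misidentified mechanism is that in your argument the specific hypotheses of (E1) play no role: $t\in(1,2]$ is exactly what makes $\alpha=(t-1)/t>0$, so that a genuine logarithmic gain is available, and $\lambda_1=\lambda_0$ is needed because the embedding used is first order and requires the gradient to lie in $L^{\lambda_0}(\mu)$; a correct proof must use both. The remainder of your proposal stands, but the borderline case needs to be rebuilt along the lines above.
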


\begin{proof}
According to the mean value theorem, for any $a,b\in G$,
\begin{equation}
\psi(b)-\psi(a) = \psi^{(1)}(\alpha b+(1-\alpha)a)(b-a)
  \quad{\rm for\ some\ }0\le\alpha(x)\le 1, \label{eq:psiba}
\end{equation}
and so the Lipschitz continuity and linear growth of $\Psi_\beta$ follow from the
boundedness of $\psi^{(1)}$.
Let $(a_n\in G\setminus\{a\})$ be a sequence converging to $a$ in $G$.
For any $1\le j\le N$ let
\begin{eqnarray}
\Delta_n
  & := & \psi^{(j-1)}(a_n) - \psi^{(j-1)}(a) - \psi^{(j)}(a)(a_n-a) 
         \label{eq:Deldef}\\
\Gamma_n
  & := & \psi^{(j)}(a_n)-\psi^{(j)}(a). \nonumber
\end{eqnarray}
According to the mean-value theorem $\Delta_n=\delta_n(a_n-a)$, where
\[
\delta_n = \psi^{(j)}(\alpha_na_n+(1-\alpha_n)a) - \psi^{(j)}(a)
            \quad{\rm for\ some\ }0\le\alpha_n(x)\le 1.
\]
H\"{o}lder's inequality shows that, for all $u_1,\ldots,u_j$ in the unit ball
of $G$,
\[
\|\Delta_n u_1\cdots u_{j-1}\|_{L^\beta(\mu)} \le \|\Delta_n\|_{L^\nu(\mu)}
  \quad{\rm and}\quad
\|\Gamma_n u_1\cdots u_j\|_{L^\beta(\mu)} \le \|\Gamma_n u_j\|_{L^\nu(\mu)},
\]
where $\nu:=\lambda_0\beta/(\lambda_0-(N-1)\beta)$.  In order to prove part (i),
it thus suffices to show that
\begin{equation}
\|a_n-a\|_G^{-1}\|\Delta_n\|_{L^\nu(\mu)}\rightarrow 0 \quad{\rm and}\quad
  \sup_{\|u\|_G=1}\|\Gamma_n u\|_{L^\nu(\mu)}\rightarrow 0. \label{eq:Delcon}
\end{equation}

If $\nu<\lambda_0$ (eg.~if (E1) does not hold) then H\"{o}lder's inequality
shows that
\[
\|\Delta_n\|_{L^\nu(\mu)}
  \le \|\delta_n\|_{L^\zeta(\mu)}\|a_n-a\|_{L^{\lambda_0}(\mu)}
      \quad{\rm and}\quad
\|\Gamma_nu\|_{L^\nu(\mu)}
  \le \|\Gamma_n\|_{L^\zeta(\mu)}\|u\|_{L^{\lambda_0}(\mu)},
\]
where $\zeta:=\lambda_0\nu/(\lambda_0-\nu)$.
Now $\delta_n$ and $\Gamma_n$ are bounded and converge to zero in probability, and
so the bounded convergence theorem establishes (\ref{eq:Delcon}).

If $\nu=\lambda_0$ then (E1) holds.  Suppose first that $\nu>1$, and let
$f_m\in C^\infty(\R^d;\R)$ be a sequence converging in $G$ to some $b\in G$.
For some $1\le i\le d$ and a weakly differentiable $g:\R^d\rightarrow\R$, let
$g^\prime:=\partial g/\partial x_i$; then
\[
\|(|f_m|^\nu)^\prime - h(b)b^\prime\|_{L^1(\mu)} = R_m + T_m,
\]
where $h\in C(\R;\R)$ is defined by $h(y)=\nu |y|^{\nu-1}{\rm sgn}(y)$,
\begin{eqnarray*}
R_m & := & \|(h(f_m)-h(b))f_m^\prime\|_{L^1(\mu)}
           \le  K\|h(f_m)-h(b)\|_{L^{\nu^*}(\mu)}, \\
T_m & := & \|h(b)(f_m^\prime-b^\prime)\|_{L^1(\mu)}
           \le  K\|f_m^\prime-b^\prime\|_{L^\nu(\mu)},
\end{eqnarray*}
$\nu^*:=\nu/(\nu-1)$, $K<\infty$ and we have used H\"{o}lder's inequality in the
bounds on $R_m$ and $T_m$.  With a slight abuse of notation, let $f_m$ be a
subsequence that converges to $b$ almost surely; then $h(f_m)\rightarrow h(b)$
almost surely and $\|h(f_m)\|_{L^{\nu^*}(\mu)}\rightarrow\|h(b)\|_{L^{\nu^*}(\mu)}$.
So it follows from Proposition 4.7.30 in \cite{boga1} that $R_m\rightarrow 0$.
Clearly $T_m\rightarrow 0$.  As in the proof of Theorem \ref{th:banspa}, this
shows that $|b|^\nu$ is weakly differentiable with respect to $x_i$, with derivative
\begin{equation}
(|b|^\nu)^\prime = h(b)b^\prime \in L^1(\mu).  \label{eq:dercon}
\end{equation}
This enables the use of a log-Sobolev inequality.
Let $\alpha:=(t-1)/t$, and let $F_\alpha, G_\alpha:[0,\infty)\rightarrow[0,\infty)$
be the complementary {\em Young functions} defined by
\begin{equation}
F_\alpha(z) = \int_0^z\log^\alpha(y+1)\;dy \quad{\rm and}\quad
G_\alpha(z) = \int_0^z\left(\exp(y^{1/\alpha})-1\right)\;dy. \label{eq:Youngdef}
\end{equation}
(See, for example, \cite{rare1}.)  $F_\alpha$ is equivalent to any Young function
$\Ftil_\alpha$, for which $\Ftil_\alpha(z)=z\log^\alpha z$ for $z\ge 2$, in the sense
that there exist constants $0<c_1<c_2<\infty$ such that, for all sufficiently large
$z$, $F_\alpha(c_1z) \le \Ftil_\alpha(z) \le F_\alpha(c_2z)$. Similarly, $G_\alpha$
is equivalent to any Young function $\tilde{G}_\alpha$, for which
$\tilde{G}_\alpha(z)=\exp(z^{1/\alpha})$ for $z\ge 2$.  We denote the
associated Orlicz spaces $L^1\log^\alpha L(\mu)$ and $\exp L^{1/\alpha}(\mu)$,
respectively.  $L^1\log^\alpha L(\mu)$ is equal (modulo equivalent norms) to the
Lorentz-Zygmund space $L^{1,1;\alpha}(\mu)$, which in the context of
the product probability space $(\R^d,\cx,\mu)$ is a {\em rearrangement-invariant
space}. (See section 3 in \cite{cipi1}.)  It follows from Theorem 7.12 in
\cite{cipi1}, together with (\ref{eq:dercon}), that
\[
\left\||b|^\nu\right\|_{L^1\log^\alpha L(\mu)} \le K\|b\|_G^\nu,
  \quad{\rm for\ some\ }K<\infty.
\]
This is clearly also true if $\nu=1$.
In the light of the generalised H\"{o}lder inequality, in order to prove
(\ref{eq:Delcon}) it now suffices to show that the sequences $|\delta_n|^\nu$ and
$|\Gamma_n|^\nu$ converge to zero in $\exp L^{1/\alpha}(\mu)$,
but this follows from their boundedness and convergence to zero in probability.
This completes the proof of part (i).

With the hypotheses of part (ii), let $(t_n\in\R\setminus\{0\})$ and $(v_n\in G)$
be sequences converging to $0$ and $u_{N+1}$, respectively, and let
$a_n:=a+t_nv_n$.  Substituting this sequence into (\ref{eq:Deldef}), we obtain
\[
t_n^{-1}\Delta_n = \delta_nv_n = \delta_n(v_n-u_{N+1}) + \delta_nu_{N+1}.
\]
Both terms on the right-hand side here converge to zero in $L^{\lambda_0}(\mu)$
since $\delta_n$ is bounded and converges to zero in probability.  This
completes the proof of part (ii).  Part (iii) follows from (\ref{eq:psiba}) and
the boundedness of the $\psi^{(j)}$.
\end{proof}

For $1\le\beta\le\lambda_0$, let $m_\beta,e_\beta:M\rightarrow L^\beta(\mu)$ be
defined by
\begin{equation}
m_\beta(P) = \Psi_\beta(\phi(P)) - 1 \quad{\rm and}\quad
e_\beta(P) = \imath\circ\phi(P) - m_\beta(P),\label{eq:metil}
\end{equation}
where $\imath:G\rightarrow L^\beta(\mu)$ is the inclusion map.  These are injective
and share the smoothness properties of $\Psi_\beta$ developed in Lemma
\ref{le:superp}.  In particular,
\begin{equation}
Um_\beta = \psi^{(1)}(a)\frac{dU}{d\Ptil_a} = \frac{dU}{d\mu}\qquad{\rm and} \qquad
Ue_\beta = \frac{\psi^{(1)}}{\psi}(a)\frac{dU}{d\Ptil_a} = \frac{dU}{dP}, \label{eq:meder}
\end{equation}
where $a=\phi(P)$, and the derivatives are Leslie derivatives if $\beta=\lambda_0$,
and (E1) does not hold.  The maps $m_\beta$ and $e_\beta$ can be used to investigate the regularity of statistical divergences on $M$.    The usual extension of the KL
divergence to sets of finite measures, such as $M$, is
\cite{amar1}:
\begin{eqnarray}
\cd(P\,|\,Q)
  & = & Q(\R^d) - P(\R^d) + \Emu p\log(p/q) \nonumber \\
        [-1.5ex] \label{eq:KLdiv} \\ [-1.5ex]
  & = & \Emu m_1(Q)-\Emu m_1(P) + \Emu (m_2(P)+1)(e_2(P)-e_2(Q)),\quad \nonumber
\end{eqnarray}
where $\Emu$ is expectation (integration) with respect to $\mu$.  This clearly
requires $\lambda_0\ge 2$.  Its smoothness is investigated in \cite{newt6}; $\cd$
admits mixed second partial derivatives (in the sense of Leslie if $\lambda_0=2$
and (E1) does not hold).  So we can use Eguchi's characterisation of the Fisher-Rao
metric on $T_PM$ \cite{eguc1}: for any $U,V\in T_PM$,
\begin{equation}
\langle U, V \rangle_P := -UV\cd
  = \langle Um_2,Ve_2\rangle_{L^2(\mu)}
  = \Emu\frac{p}{(1+p)^2}U\phi V\phi. \label{eq:fisher}
\end{equation}
It follows that  $\langle V, U \rangle_P = \langle U, V \rangle_P$ and that 
$\langle yU, V \rangle_P=\langle U, yV \rangle_P = y\langle U, V \rangle_P$
for any $y\in\R$; furthermore,
\begin{equation}
\langle U, U \rangle_P
  \le \Emu (U\phi)^2 \le \|U\phi\|_G^2, \label{eq:fishdom}
\end{equation}
and $\langle U,U\rangle_P=0$ if and only if $U\phi=0$.  So the metric is
positive definite and dominated by the chart-induced norm on $T_PM$.  However the
Fisher-Rao metric and chart-induced norm are not equivalent, even when the model
space is $L^2(\mu)$ \cite{newt4}.  In the general, infinite-dimensional case
$(T_PM,\langle\fndot,\fndot\rangle_P)$ is not a Hilbert space; the Fisher-Rao
metric is a {\em weak} Riemannian metric.

If $\lambda_0\ge 3$ then $M$ also admits the Amari-Chentsov tensor.  This is the
symmetric covariant 3-tensor field defined by
\begin{equation}
\tau_P(U,V,W) = \Emu Um_3Ve_3We_3 = \Emu\frac{p}{(1+p)^3}U\phi V\phi W\phi.
                \label{eq:AmChen}
\end{equation}
The regularity of the Fisher-Rao metric and higher-order covariant tensors can be
derived from that of $\Psi_\beta$, as developed in Lemma \ref{le:superp}.  They
become smoother with increasing values of $\lambda_0$.  Log-Sobolev embedding
enhances this gain for particular integer values of $\lambda_0$.  Suppose,
for example, that $\lambda_0=2$.  If (E1) holds then the metric is a continuous
covariant 2-tensor on $M$; however if (E1) does not hold then, although the
composite map $M\ni P\mapsto\langle\bfU(P),\bfV(P)\rangle_P\in\R$ is continuous for
all continuous vector fields $\bfU,\bfV$, the metric is not continuous in the sense
of the operator norm.

If $\lambda_0\ge 2$ the variables $m_2$ and $e_2$ are {\em bi-orthogonal}
representations of measures in $M$.  This can be seen in the following
{\em generalised cosine rule}:
\begin{eqnarray}
\cd(P|R)
  & = & \cd(P|Q) + \cd(Q|R) \nonumber \\
        [-1.5ex] \label{eq:cosine} \\ [-1.5ex]
  &   & \quad - \left\langle m_2(P)-m_2(Q), e_2(R)-e_2(Q)
          \right\rangle_{L^2(\mu)}. \nonumber
\end{eqnarray}
Setting $R=P$ and using the fact that $m_2+e_2=\imath\circ\phi$, where
$\imath:G\rightarrow L^2(\mu)$ is the inclusion map, we obtain the global bound
\begin{eqnarray}
\cd(P|Q) + \cd(Q|P)
   & = & \langle m_2(P)-m_2(Q)\,,\, e_2(P)-e_2(Q)\rangle_{L^2(\mu)} \nonumber \\
         [-1.5ex] \label{eq:divsumt} \\ [-1.5ex]
   & \le & \half\|\phi(P)-\phi(Q)\|_{L^2(\mu)}^2
           \le \half\|\phi(P)-\phi(Q)\|_G^2. \nonumber
\end{eqnarray}

\section{Special Model Spaces} \label{se:specms}

The construction of $M$ and $TM$ in the previous section is valid for any of the
weighted mixed-norm spaces developed in section \ref{se:modspa}, including
the {\em fixed norm} space $G_f:=W^{k,(\lambda,\ldots,\lambda)}(\mu)$. However,
certain spaces are particularly suited to the deformed exponential function $\psi$;
these are introduced next.  A special class of mixed-norm spaces, on which the
nonlinear superposition operators associated with $\psi$ {\em act continuously}, is
developed in section \ref{se:gtil}.  Section \ref{se:fixnor} investigates fixed-norm
spaces and shows that, with the exception of the cases $k=1$, $\lambda\in[1,\infty)$
and $k=2$, $\lambda=1$, they do not share this property.

\subsection{A Family of Mixed Norm Spaces} \label{se:gtil}

This section develops the mixed-norm space $\Gtil:=W^{k,\Lambda}(\mu)$ with
$\lambda_0\ge\lambda_1\ge k$ and $\lambda_j=\lambda_1/j$ for $2\le j\le k$.
Lemma \ref{le:superp} can be augmented as follows.

\begin{prop} \label{pr:rapsi}
\begin{enumerate}
\item[(i)] For any $a\in \Gtil$, $\psi(a)\in \Gtil$.
\item[(ii)] The nonlinear superposition (Nemytskii) operator
  $\Psi_m:\Gtil\rightarrow\Gtil$, defined by $\Psi_m(a)(x)=\psi(a(x))$, is continuous.
\item[(iii)] $\Psi_m(\Gtil)$ is convex.
\end{enumerate}
\end{prop}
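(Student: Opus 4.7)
I would guess the weak derivatives via the classical Faà di Bruno formula, check the guess lies in the correct Lebesgue spaces, and then validate it as a genuine weak derivative by mollification. For a multi-index $s$ with $|s|=n\ge 1$, the guess is a linear combination of terms of the form $\psi^{(j)}(a)\,D^{\sigma_1}a\cdots D^{\sigma_j}a$, where $|\sigma_\ell|\ge 1$ and $\sum_\ell|\sigma_\ell|=n$. By Lemma \ref{le:psidif}(ii) the factor $\psi^{(j)}$ is bounded, so each such term is pointwise dominated by a constant times $\prod_\ell|D^{\sigma_\ell}a|$. The hypothesis $\lambda_j=\lambda_1/j$ gives $\sum_\ell 1/\lambda_{|\sigma_\ell|}=n/\lambda_1=1/\lambda_n$, so Hölder places the product in $L^{\lambda_n}(\mu)$; the linear growth of $\psi$ from Lemma \ref{le:psidif}(i) handles $L^{\lambda_0}(\mu)$. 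Applying Faà di Bruno classically to the smooth approximations $Ja_m$ of Lemma \ref{le:monoleb} and then passing to the weak-derivative limit via dominated convergence identifies the guess with the genuine weak derivative.

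\textbf{Part (ii).} Given $a_n\to a$ in $\Gtil$, I would prove $D^s\psi(a_n)\to D^s\psi(a)$ in $L^{\lambda_n}(\mu)$ term-by-term in the Faà di Bruno expansion. Each term-difference $\psi^{(j)}(a_n)\prod_\ell D^{\sigma_\ell}a_n-\psi^{(j)}(a)\prod_\ell D^{\sigma_\ell}a$ splits, by adding and subtracting, into (a) $\psi^{(j)}(a_n)$ times the difference of the products, and (b) $[\psi^{(j)}(a_n)-\psi^{(j)}(a)]\prod_\ell D^{\sigma_\ell}a$. Piece (a) is handled by a telescoping expansion of the product, together with the Hölder inequality of part (i) and the uniform bound on $\psi^{(j)}$, since each $D^{\sigma_\ell}a_n$ is bounded in $L^{\lambda_{|\sigma_\ell|}}(\mu)$ and some factor $D^{\sigma_\ell}a_n-D^{\sigma_\ell}a$ tends to zero in $L^{\lambda_{|\sigma_\ell|}}(\mu)$. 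Piece (b) is handled by noting that continuity and boundedness of $\psi^{(j)}$ together with subsequential a.e.~convergence of $a_n$ yield $\psi^{(j)}(a_n)\to\psi^{(j)}(a)$ $\mu$-a.e.~along a subsequence; since $|\psi^{(j)}(a_n)-\psi^{(j)}(a)|$ is uniformly bounded and $|\prod_\ell D^{\sigma_\ell}a|^{\lambda_n}\in L^1(\mu)$, dominated convergence gives $L^{\lambda_n}(\mu)$-convergence along that subsequence, and a standard subsequence-of-subsequence argument upgrades this to the full sequence.

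\textbf{Part (iii).} Given $p_i=\psi(a_i)\in\Psi_m(\Gtil)$ for $i=1,2$, I set $p_\alpha:=\alpha p_1+(1-\alpha)p_2$ and propose $b:=\log_d p_\alpha$ as the required pre-image; since $\log_d:(0,\infty)\to\R$ is a bijection and $p_\alpha>0$, it is enough to show $b\in\Gtil$. Monotonicity of $\log_d$ and convexity of $\psi$ give the pointwise bracket $\min(a_1,a_2)\le b\le\max(a_1,a_2)$, which handles $L^{\lambda_0}(\mu)$. For the derivatives, I write $\log_d p_\alpha = p_\alpha-1+\log p_\alpha$ and apply Faà di Bruno to $\log p_\alpha$, producing terms of the form $p_\alpha^{-|\pi|}\prod_{B\in\pi}D^B p_\alpha=\prod_{B\in\pi}(D^B p_\alpha/p_\alpha)$. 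The convex-combination identity $D^B p_\alpha/p_\alpha=\gamma_1 D^B p_1/p_1+\gamma_2 D^B p_2/p_2$, where $\gamma_i:=\alpha_i p_i/p_\alpha\in[0,1]$ with $\alpha_1=\alpha$, $\alpha_2=1-\alpha$ and $\gamma_1+\gamma_2=1$, reduces matters to the single-component quotients $D^B p_i/p_i$. Applying Faà di Bruno to $p_i=\psi(a_i)$ and invoking the boundedness of $\psi^{(j)}/\psi$ from Lemma \ref{le:psidif}(ii) puts $D^B p_i/p_i\in L^{\lambda_{|B|}}(\mu)$, after which the Hölder step of part (i) delivers the full Faà di Bruno term in $L^{\lambda_{|s|}}(\mu)$.

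\textbf{Main obstacle.} The most delicate step is legitimising the formal Faà di Bruno expressions as weak derivatives in part (iii), where $p_\alpha$ need not be bounded away from zero and the denominators $p_\alpha^{|\pi|}$ lack uniform control. I would handle this by first mollifying $a_1,a_2$ via Lemma \ref{le:monoleb}, so that $p_\alpha$ becomes smooth and locally bounded below, establishing the identities classically for the approximants, and then passing to the limit using the $L^{\lambda_{|B|}}$ bounds already obtained together with the continuity from part (ii).
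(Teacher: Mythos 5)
Your proposal follows essentially the same route as the paper's proof: Fa\'{a} di Bruno expansions with H\"{o}lder applied to the mixed-norm exponents $\lambda_j=\lambda_1/j$, the split of each term-difference into a product-difference piece (telescoping plus H\"{o}lder) and a $\psi^{(j)}$-difference piece (boundedness plus a.e./in-probability convergence and dominated convergence), identification of weak derivatives through the smooth approximants of Lemma \ref{le:monoleb}, and, for convexity, the reduction of $D^\sigma p_\alpha/p_\alpha$ to the single-component quotients $D^\sigma p_i/p_i$ controlled by the boundedness of $\psi^{(j)}/\psi$. The only differences are cosmetic (e.g.\ your $\min/\max$ bracket for the zero-order term in (iii) in place of the paper's convexity/Jensen argument, and your explicit mollification step to legitimise the chain rule, which the paper leaves implicit), so the argument is sound as it stands.
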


\begin{proof}
A {\em partition} of $s\in S_1$ is a set $\pi=\{\sigma_1,\ldots,\sigma_n\in S_1\}$
such that $\sum_i\sigma_i=s$. Let $\Pi(s)$ denote the set of distinct partitions of
$s$ and, for any $\pi\in\Pi(s)$, let $|\pi|$ denote the number of $d$-tuples in
$\pi$.  According to the Fa\'{a} di Bruno formula, for any $s\in S_1$ and any
$f\in C^\infty(\R^d;\R)$,
\begin{equation}
D^s\psi(f) = F_s(f)
  := \sum_{\pi\in\Pi(s)}K_\pi\psi^{(|\pi|)}(f)\prod_{\sigma\in\pi}D^\sigma f,
       \label{eq:FdiB}
\end{equation}
where the $K_\pi<\infty$ are combinatoric constants. $D^s\psi(f)\in C^\infty(\R^d;\R)$
since the derivatives of $\psi$ are bounded and $D^\sigma f\in C^\infty(\R^d;\R)$ for
all $\sigma\in\pi$.  We set $F_0:=\psi$, and extend the domain of $F_s$ to
$\Gtil$ in the obvious way.

Let $(f_n\in C^\infty(\R^d;\R))$ be a sequence converging in the sense of $\Gtil$ to
$a$. Since the first derivative of $\psi$ is bounded, the mean value
theorem shows that $\psi(f_n)\rightarrow \psi(a)=F_0(a)$ in the sense of
$L^{\lambda_0}(\mu)$.  Furthermore, for any $s\in S_1$,
\begin{eqnarray}
\left|D^s\psi(f_n)-F_s(a)\right|
  & \le & K\sum_{\pi\in\Pi(s)}\left|\psi^{(|\pi|)}(f_n)\right|\Gamma_{\pi,n}
          \nonumber \\
          [-1.5ex] \label{eq:serrspl} \\ [-1.5ex]
  &     & \quad + K\sum_{\pi\in\Pi(s)}
          \left|\psi^{(|\pi|)}(f_n)-\psi^{(|\pi|)}(a)\right|
            \prod_{\sigma\in\pi}|D^\sigma a|,\quad \nonumber
\end{eqnarray}
where
\[
\Gamma_{\pi,n}
  := \bigg|\prod_{\sigma\in\pi}D^\sigma f_n
          -\prod_{\sigma\in\pi}D^\sigma a\,\bigg| 
  \le  \sum_{\sigma\in\pi}|D^\sigma (f_n-a)|
          \prod_{\tau\in\pi\setminus\{\sigma\}} \left(|D^\tau f_n|+|D^\tau a|\right).
\]
Now $\sum_{\sigma\in\pi}|\sigma|=|s|$, and so it follows from H\"{o}lder's inequality that
\[
\|\Gamma_{\pi,n}\|_{L^{\lambda/|s|}(\mu)}
  \le \sum_{\sigma\in\pi}\|D^\sigma (f_n-a)\|_{L^{\lambda/|\sigma|}(\mu)}
      \prod_{\tau\in\pi\setminus\{\sigma\}}
      \left\||D^\tau f_n|+|D^\tau a|\right\|_{L^{\lambda/|\tau|}(\mu)},
\]
which, together with the boundedness of the derivatives of $\psi$, shows that the first
term on the right-hand side of (\ref{eq:serrspl}) converges to zero in the sense of
$L^{\lambda/|s|}(\mu)$.  The second term converges to zero in probability and is
dominated by the function $C\prod_{\sigma\in\pi}|D^\sigma a|\in L^{\lambda/|s|}(\mu)$
for some $C<\infty$, and so it also converges to zero in the sense of
$L^{\lambda/|s|}(\mu)$.  We have thus shown that, for any $s\in S_0$, $D^s\psi(f_n)$
converges to $F_s(a)$ in the sense of $L^{\lambda/|s|}(\mu)$.  In particular,
$F_s(a)\in L^{\lambda/|s|}(\mu)$.  That $\psi(a)$ is weakly differentiable with
derivatives $D^s\psi(a)=F_s(a)$, for all $s\in S_1$, follows from arguments similar
to those in (\ref{eq:weakder}) with $f_n$ playing the role of $a_n$, and this
completes the proof of part (i).

Let $(a_n\in G_m)$ be a sequence converging to $a$ in the sense of $\Gtil$.  The
above arguments, with $a_n$ replacing $f_n$, show that, for any $s\in S_0$,
$F_s(a_n)\rightarrow F_s(a)$ in the sense of $L^{\lambda/|s|}(\mu)$,
and this completes the proof of part (ii).

For any $P_0,P_1\in M$ and any $y\in(0,1)$, let $P_y:=(1-y)P_0+yP_1$.  Clearly
$p_y\in\Gtil$; we must show that $\log p_y\in\Gtil$.  Let
$f:(0,\infty)\rightarrow\R$ be defined by
\[
f(z) = \indic_{(0,1)}(z)(-\log z)^\lambda + \indic_{[1,\infty)}(z)(z-1)^\lambda;
\]
then $|\log z|^\lambda\le f(z)$, and $f$ is of class $C^2$ with non-negative second
derivative, and so is convex.  It follows from Jensen's inequality that
\[
\Emu|\log p_y|^\lambda
   \le \Emu f(p_y) \le (1-y)\Emu f(p_0) + y\Emu f(p_1) < \infty. 
\]
A further application of the Fa\'{a} di Bruno formula shows that, for any $s\in S_1$,
\[
|D^s\log p_y|
  \le K_1\sum_{\pi\in\Pi(s)}
            |\log^{(|\pi|)}(p_y)|\prod_{\sigma\in\pi}|D^\sigma p_y|
  \le K_2\sum_{\pi\in\Pi(s)} \prod_{\sigma\in\pi}\left|\frac{D^\sigma p_0}{p_0}
            +\frac{D^\sigma p_1}{p_1}\right|.
\]
Now $p_i=\psi(a_i)$ for some $a_0,a_1\in\Gtil$, and so
$D^\sigma p_i/p_i=F_\sigma(a_i)/\psi(a_i)$.  Since $\psi^{(n)}/\psi$ is bounded, the
arguments used above to show that $D^s\psi(a_i)\in L^{\lambda/|s|}(\mu)$ can be used
to show that $D^\sigma\psi(a_i)/\psi(a_i)\in L^{\lambda/|\sigma|}(\mu)$.
H\"{o}lder's inequality then shows that $D^s\log p_y\in L^{\lambda/|s|}(\mu)$.  We
have thus shown that $\log p_y\in\Gtil$.  So $P_y\in M$, and this completes the
proof of part (iii).
\end{proof}

\subsection{Fixed Norm Spaces} \label{se:fixnor}

Proposition \ref{pr:rapsi} shows that the function $\psi$ defines a superposition
operator that ``acts continuously'' on the mixed norm Sobolev space $\Gtil$.  The
question naturally arises whether or not it has this property with respect to any
fixed norm spaces (other than $W^{1,1}(\mu)$).  Since, for $k\ge 2$ and
$\lambda\ge\lambda_0$, the space $G_f=W^{k,(\lambda,\ldots,\lambda)}(\mu)$ is a
subset of $\Gtil$ and has a topology stronger than the relative topology, it is
clear that $\psi(G_f)\subset\psi(\Gtil)\subset\Gtil$, and that the restriction, 
$\Psi:G_f\rightarrow\Gtil$, is continuous. However, except in one specific case,
$\psi(G_f)$ is not a subset of $G_f$, as the following proposition shows.

\begin{prop} \label{pr:fixedact} 
If $\lambda>1$ and $k\ge 2$ then there exists an $a\in G_f$ for which
$\psi(a)\notin G_f$.
\end{prop}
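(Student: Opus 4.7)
The plan is to exhibit some $a\in G_f$ with $\psi(a)\notin G_f$. Since $\psi$ has linear growth and $\psi'$ is bounded (Lemma \ref{le:psidif}), both $\psi(a)\in L^\lambda(\mu)$ and $D\psi(a)=\psi'(a)Da\in L^\lambda(\mu)$ follow automatically, so the failure must occur at $D^s\psi(a)$ with $2\leq|s|\leq k$. I would work with $|s|=2$: by Fa\'a di Bruno,
\[
D^s\psi(a)
 = \psi'(a)\,D^s a
 \;+\; \textstyle\sum_{\sigma+\tau=s,\ |\sigma|=|\tau|=1} c_{\sigma,\tau}\,\psi''(a)\,D^\sigma a\,D^\tau a,
\]
and the first term lies in $L^\lambda(\mu)$ because $\psi'$ is bounded. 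The crux is the second term: it is quadratic in first derivatives, a natural $L^{\lambda/2}(\mu)$ object --- precisely the target $L^{\lambda_2}$ for $|s|=2$ in the mixed-norm space $\Gtil$ of section \ref{se:gtil} --- whereas $G_f$ demands $L^\lambda(\mu)$.

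The task therefore reduces to producing $a\in G_f$ with $D_i a\in L^\lambda(\mu)\setminus L^{2\lambda}(\mu)$ for some coordinate $i$, arranged so that $\psi''(a)$ is bounded below by a positive constant on the set where $(D_i a)^2$ concentrates (otherwise the $1/a^2$-type decay of $\psi''$, cf.\ Lemma \ref{le:psidif}(ii), may rescue the product). A transparent candidate is the local power singularity $a(x)=\chi(x)|x|^\alpha$, where $\chi\in C_0^\infty(\R^d;\R)$ equals $1$ near the origin and $\alpha\in(0,1)$: then $a$ is bounded on $\supp(\chi)$ so $\psi''(a)$ is bounded below there; $D^s a$ behaves like $|x|^{\alpha-|s|}$ near $0$; and the two integrability conditions
\[
\alpha>k-d/\lambda\quad\text{(for $a\in W^{k,\lambda}(\mu)$)}
\qquad\text{and}\qquad
\alpha\leq 1-d/(2\lambda)\quad\text{(for $D_ia\notin L^{2\lambda}(\mu)$)}
\]
define a non-empty interval for $\alpha$ whenever the dimension is large enough relative to $\lambda$ and $k$. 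In the residual regime, the same $L^\lambda/L^{2\lambda}$-gap can be engineered by a countable superposition of smooth bumps $\sum_n c_n\phi((x-x_n)/w_n)$, with centres, heights and widths tuned so that each $D^s a$, $|s|\leq k$, is summable in $L^\lambda(\mu)$ while $\|D_i a\|_{L^{2\lambda}(\mu)}=\infty$.

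Once such an $a$ is constructed, $\|\psi''(a)(D_i a)^2\|_{L^\lambda(\mu)}=\infty$ forces $D^s\psi(a)\notin L^\lambda(\mu)$, so $\psi(a)\notin W^{2,\lambda}(\mu)\supseteq G_f$. The main technical obstacle is the parameter bookkeeping required to secure $a\in W^{k,\lambda}(\mu)$ together with $D_ia\notin L^{2\lambda}(\mu)$ in every admissible dimension: the clean power-law makes the mechanism transparent in sufficient generality, while the remaining low-dimensional cases call for the more delicate bump-sum variant. The underlying conceptual point is that $G_f$ forces each first derivative individually into $L^\lambda(\mu)$ but imposes no control on its $L^{2\lambda}(\mu)$-norm, and the quadratic Fa\'a di Bruno term exploits exactly this gap.
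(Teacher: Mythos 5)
Your reduction to a second-order failure (showing $\psi(a)\notin W^{2,(\lambda,\lambda,\lambda)}(\mu)\supseteq G_f$) is legitimate, but the construction you propose cannot deliver it, and the gap is structural rather than a matter of ``parameter bookkeeping''. Your two displayed conditions, $\alpha>k-d/\lambda$ and $\alpha\le 1-d/(2\lambda)$, are simultaneously satisfiable only when $d>2\lambda(k-1)$, and in that case $\alpha\le 1-d/(2\lambda)\le 2-k\le 0$: the candidate $a=\chi|x|^\alpha$ is then unbounded at the origin, so your own side condition that $\psi''(a)$ be bounded below where $(D_ia)^2$ concentrates fails. Worse, by Lemma \ref{le:psidif}(ii) one has $\psi''(a)\sim a^{-2}$ as $a\to\infty$, and this decay exactly neutralises the singularity: $\psi''(a)\,|D_ia|^2\sim|x|^{-2\alpha}\,|x|^{2\alpha-2}=|x|^{-2}$, which lies in $L^\lambda$ near $0$ precisely because $d>2\lambda(k-1)\ge 2\lambda$. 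If instead you insist on $\alpha>0$ so that $a$ is bounded, the admissible interval is empty in every dimension. In fact no example of the kind you are aiming for can exist: for a bounded, compactly supported $a\in W^{k,\lambda}$, the Gagliardo--Nirenberg inequality $\|D^\sigma a\|_{L^{k\lambda/|\sigma|}}\le C\,\|D^ka\|_{L^\lambda}^{|\sigma|/k}\|a\|_{\infty}^{1-|\sigma|/k}$ places every Fa\'{a} di Bruno product in $L^\lambda$ (the weight is bounded above and below on compact sets), so $\psi(a)\in G_f$. The local $L^\lambda$-versus-$L^{2\lambda}$ gap you try to exploit is simply not available.

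The genuine mechanism must use the decay of the weight $r$ at spatial infinity: only there can an element of $G_f$ have first derivatives that are ``too large'' while its values sweep through a fixed window on which the relevant derivative of $\psi$ is bounded below. This is exactly what the paper's Dahlberg-type construction does: bumps of geometrically growing amplitude supported in shrinking spheres centred at $\sigma_n=(n^{1/t},0,\ldots,0)$, where $r\approx e^{-n}$; membership in $G_f$ is controlled by series of the form $\sum_n\alpha^{\lambda n}n^{j\lambda-d}e^{-n}$, while divergence is obtained only on thin slabs $T_n$ on which $a\in[\zeta_1,\zeta_2]$ (so $|\psi^{(k)}(a)|\ge\epsilon$), of thickness proportional to the reciprocal of the local slope; the hypotheses $k\ge2$, $\lambda>1$ enter precisely through the resulting gap $(k-1)\lambda>1$ between the two exponents. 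Your fallback ``countable superposition of bumps, suitably tuned'' is therefore not a residual low-dimensional patch but the entire proof, and as stated it is unsupported on the two points where the difficulty lies: (a) growing amplitudes are forced, since with bounded heights the zeroth-order membership constraint $\Emu|a|^\lambda<\infty$ already dominates any would-be divergent series, whatever the widths and centres; and (b) once the amplitudes grow, $\psi''(a)$ (or $\psi^{(k)}(a)$) is bounded below only on thin level slabs, and the divergence estimate must be carried out there, against the weight. Neither the interaction with the weight nor this slab bookkeeping appears in your proposal, so the proof as written does not establish the proposition for any admissible $(d,k,\lambda)$.
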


\begin{proof} (Adapted from Dahlberg's counterexample.) Let $t\in(0,2]$, $z_t\ge 0$
and $l_t:\R^d\rightarrow\R$ be as in section \ref{se:modspa}, and let
$\{S_n\subset\R^d, n\in\N\}$ be the sequence of closed spheres with centres
$\sigma_n=(n^{1/t},0,\ldots,0)$ and radii $1/n$.  If $x\in S_n$ then
$|l_t(x)-l_t(\sigma_n)|\le K/\sqrt{n}$ for some $K<\infty$. Let
$\varphi\in C_0^\infty(\R^d;\R)$ be such that
\[
\varphi(y) = y_1 \quad{\rm if\ \ }|y|\le 1/2\quad{\rm and}\quad
  \varphi(y) = 0 \quad{\rm if\ \ }|y|\ge 1.
\]
Since $\psi$ is not a polynomial, its $k$'th derivative $\psi^{(k)}$ is not
identically zero, and we can choose $-\infty<\zeta_1<\zeta_2<\zeta_1+1$ such that
$|\psi^{(k)}(z)|\ge\epsilon$ for all $z\in[\zeta_1,\zeta_2]$ and some $\epsilon>0$.
Finally, let $a:\R^d\rightarrow\R$ be defined by the sum
\begin{equation}
a(x) = \zeta_1 + \sum_{n=m}^\infty \alpha^n\varphi(n(x-\sigma_n)), \label{eq:alphandef}
\end{equation}
where $\alpha=\exp(2/((k+1)\lambda-1))$ and $m>z_t+1$. (The support of the $n$'th
term in the sum here is a subset of $S_n$, and so $a$ is well defined and of class
$C^\infty$.)  We claim that $a\in G_f$; in fact, for any $s\in S_1$ with $|s|=j$,
\begin{eqnarray}
\Emu|D^sa|^\lambda
  & \le & K\sum_{n=m}^\infty \alpha^{\lambda n}n^{j\lambda}\exp(-n)
            \int|D^s\varphi(n(x-\sigma_n))|^\lambda dx \nonumber \\
             [-1.5ex] \label{eq:ldera} \\ [-1.5ex]
  &  =  & K\sum_{n=m}^\infty \alpha^{\lambda n}n^{j\lambda-d}\exp(-n)
            \int|D^s\varphi(y)|^\lambda dy < \infty, \nonumber
\end{eqnarray}
and a similar bound can be found for $\Emu|a-\zeta_1|^\lambda$.  It now suffices to
show that $D^s\psi(a)\notin L^\lambda(\mu)$, where $s=(k,0,\ldots,0)$. Let
\[
T_n := \left\{ x\in\R^d: |x-\sigma_n|\le 1/2n {\rm\ \ and\ \ }
       0 \le (x-\sigma_n)_1 \le (n\alpha^n)^{-1}(\zeta_2-\zeta_1)\right\};
\]
then, for any $x\in T_n$, $a(x)=\zeta_1+n\alpha^n(x-\sigma_n)_1\in[\zeta_1,\zeta_2]$,
and so
\begin{eqnarray}
\Emu|D^s\psi(a)|^\lambda
  & \ge & \sum_{n=m}^\infty \alpha^{k\lambda n}n^{k\lambda}
          \int_{T_n}|\psi^{(k)}(\zeta_1+n\alpha^n(x-\sigma_n)_1)|^\lambda r(x)\,dx
          \nonumber \\
  & \ge & K_1\epsilon^\lambda\sum_{n=m}^\infty \alpha^{k\lambda n}n^{k\lambda}
            \exp(-n) |T_n| \label{eq:kderpa} \\
  &  =  & K_2\epsilon^\lambda\sum_{n=m}^\infty \alpha^{(k\lambda-1)n}
            n^{k\lambda-d}\exp(-n) = +\infty, \nonumber
\end{eqnarray}
where $|T_n|$ is the Lebesgue measure of $T_n$, and this completes the proof.
\end{proof}

As (\ref{eq:ldera}) shows, no amount of ``derivative sacrifice'' will overcome this
property of $G_f$: there is no choice of $2\le m<k$ such that
$\psi(a)\in W^{m,(\lambda,\ldots,\lambda)}(\mu)$ for all $a\in G_f$.  (Change $k$ to
$m$ in the definition of $\alpha$.)  However, we are able to prove the following,
which includes the case $k=2$, $\lambda=\nu=1$.

\begin{prop} \label{pr:fixedn}
Let $k\ge 2$, let $\lambda\ge k-1$ and let $\nu:=(\lambda+1)/k$.
\begin{enumerate}
\item[(i)] For any $a\in G_f$, $\psi(a)\in W^{k,(\nu,\ldots,\nu)}(\mu)$.
\item[(ii)] The nonlinear superposition operator
  $\Psi_f: G_f\rightarrow W^{k,(\nu,\ldots,\nu)}(\mu)$, defined by
  $\Psi_f(a)(x)=\psi(a(x))$, is continuous.
\end{enumerate}
\end{prop}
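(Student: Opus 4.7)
The plan is to mirror the Fa\'{a} di Bruno-H\"older strategy of Proposition~\ref{pr:rapsi}, expanding $D^s\psi(a)$ for $|s|\le k$ as
\begin{equation*}
D^s\psi(a)=\sum_{\pi\in\Pi(s)}K_\pi\,\psi^{(|\pi|)}(a)\prod_{\sigma\in\pi}D^\sigma a,
\end{equation*}
and bounding each summand in $L^\nu(\mu)$. Since $\psi^{(n)}$ is globally bounded for $n\ge 1$ (Lemma~\ref{le:psidif}(ii)) and $D^\sigma a\in L^\lambda(\mu)$ for every $\sigma\in S_1$, a uniform-exponent H\"older estimate places $\prod_{\sigma\in\pi}|D^\sigma a|$ in $L^{\lambda/|\pi|}(\mu)$; because $\mu$ is a probability measure, this is contained in $L^\nu(\mu)$ whenever $|\pi|(\lambda+1)\le k\lambda$. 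The hypothesis $\lambda\ge k-1$ is exactly what is needed for this inclusion to hold at every $|\pi|\le k-1$.

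The heart of the argument is therefore the extremal partition $|\pi|=|s|=k$, in which each $\sigma\in\pi$ has $|\sigma|=1$ and the straightforward H\"older bound only delivers $L^{\lambda/k}(\mu)$, short of the target $L^\nu=L^{(\lambda+1)/k}(\mu)$ by exactly $1/k$ in the exponent. To bridge this I would invoke Lemma~\ref{le:psidif}(ii) in the sharpened form $|\psi^{(k)}(a)|\le C(1+\psi(a))^{-k}$, together with the identity $\partial_i a/(1+\psi(a))=\partial_i\log\psi(a)$, giving
\begin{equation*}
|\psi^{(k)}(a)|\,\prod_{i=1}^k|\partial_{j_i}a|\le C\prod_{i=1}^k|\partial_{j_i}\log\psi(a)|.
\end{equation*}
An AM-GM step at the common exponent $k\nu=\lambda+1$ then reduces the task to showing $\partial_j\log\psi(a)\in L^{\lambda+1}(\mu)$, which I would derive from the weighted log-Sobolev embedding of Cipriani-Piccoli (Theorem~7.12 of \cite{cipi1}) already used in Lemma~\ref{le:superp}, exploiting the super-exponential decay of $\mu$ guaranteed by the growth condition on $\theta_t$.

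With each Fa\'{a} di Bruno summand controlled in $L^\nu(\mu)$, identification of $D^s\psi(a)$ as a genuine weak derivative is performed by mollification, exactly as in the closing part of the proof of Proposition~\ref{pr:rapsi}: approximate $a\in G_f$ by smooth functions $f_n\to a$ in $G_f$, apply the classical chain rule $D^s\psi(f_n)=F_s(f_n)$, and pass to the limit using Lemma~\ref{le:molprop} and dominated convergence. Part~(ii) follows by running the same estimates on $\psi(a_n)-\psi(a)$, with the bounded convergence theorem handling the $\psi^{(|\pi|)}$-factors (uniformly bounded and convergent in probability along $a_n\to a$), mirroring the continuity portion of Proposition~\ref{pr:rapsi}. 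The principal obstacle is precisely the critical partition: the fixed-norm setting loses the graded scaling $\lambda_j=\lambda_1/j$ that powered Proposition~\ref{pr:rapsi}, so H\"older interpolation amongst the $L^\lambda$-norms of derivatives alone is insufficient, and one must convert regularity into integrability through a log-Sobolev inequality to recover the missing $1/k$ in the Lebesgue exponent.
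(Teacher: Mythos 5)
Your reduction of the critical partition $|\pi|=|s|=k$ is where the argument breaks. The inequality $|\psi^{(k)}(a)|\le C(1+\psi(a))^{-k}$ is true, but it throws away the factor $\psi^{(1)}(a)=\psi(a)/(1+\psi(a))$ that Lemma \ref{le:psidif}(ii) provides ($\psi^{(k)}\le C\,\psi^{(1)}/(1+\psi)^{k}$), and that factor is exactly what saves the critical term in the region where $a\to-\infty$ and $\psi(a)$ is exponentially small. Once it is discarded, your target statement --- that $\partial_j\log\psi(a)=\partial_j\bigl(a-\psi(a)\bigr)\in L^{\lambda+1}(\mu)$ for every $a\in G_f$ --- is false. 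A Dahlberg-type example in the spirit of Proposition \ref{pr:fixedact} shows this: take $a=-\sum_{n\ge m}e^{\beta n}\varphi\bigl(n(x-\sigma_n)\bigr)$ with $\sigma_n=(n^{1/t},0,\ldots,0)$ and $1/(\lambda+1)\le\beta<1/\lambda$. Then $a$ and all its derivatives up to order $k$ lie in $L^\lambda(\mu)$ (the weight contributes $\approx e^{-n}$ on the $n$-th bump), so $a\in G_f$; but $a\le 0$ forces $1+\psi(a)$ to be bounded above, so $\partial_1\log\psi(a)\asymp\partial_1 a$, whose $L^{\lambda+1}(\mu)$-norm diverges because $\beta(\lambda+1)\ge 1$. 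Relatedly, the log-Sobolev embedding you invoke (Theorem 7.12 of \cite{cipi1}, due to Cianchi--Pick--Slav\'{i}kov\'{a}, not ``Cipriani--Piccoli'') cannot supply the missing $1/k$ of Lebesgue exponent: in this sub-Gaussian weighted setting it upgrades $W^{1,1}(\mu)$ only to $L^1\log^\alpha L(\mu)$, a logarithmic gain, never a gain of a full power. So the ``heart of the argument'' as you present it reduces the proposition to a false intermediate claim, and no H\"{o}lder/AM--GM rearrangement of it can be repaired by that embedding alone.

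The paper takes a genuinely different route for precisely this term: it never improves the integrability of $\psi^{(k)}(a)(a')^k$ directly, but proves \emph{uniform integrability} of the sequence $|\psi^{(k)}(a_n)|^\nu|a_n'|^{k\nu}$ along $a_n\to a$. This is done by building de la Vall\'{e}e Poussin functions $F$, $H$, $G$ from the convergent sequence $h(K_\rho a_n)$, and then estimating $\Emu G\bigl(|\psi^{(k)}(f)|^\nu|f'|^{k\nu}\bigr)$ for smooth $f$ via the identity $\psi^{(1)}(f)f'=(\psi(f))'$ and an integration by parts, which converts the critical product into terms involving $f''$ and the weight derivative $l'$ (the quantities $R$, $S$, $T$ in (\ref{eq:parts1})--(\ref{eq:Sbnd})). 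The log-Sobolev embedding enters only at the last step, to pair $|f'|^\lambda H(|f'|)\in L^1\log^\alpha L(\mu)$ against $l'\in\exp L^{1/\alpha}(\mu)$ when $t\in(1,2]$. Your outer structure (Fa\'{a} di Bruno, H\"{o}lder for $|\pi|\le k-1$ using $\lambda\ge k-1$, mollification to identify weak derivatives) matches the paper, but to complete the proof you need to replace your critical-term step by an argument of this uniform-integrability/integration-by-parts type, retaining the factor $\psi^{(1)}(a)$ rather than discarding it.
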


\begin{proof}
As in the proof of Proposition \ref{pr:rapsi}, it suffices to show that, for any
$a\in G_f$, any sequence $(a_n\in G_f)$ converging to $a$ in $G_f$, and any
$s\in S_0$, $F_s(a_n)\rightarrow F_s(a)$ in $L^\nu(\mu)$, where $F_s$ is as defined
in (\ref{eq:FdiB}).  For any $s$ with $|s|<k$ this can be accomplished by
means of H\"{o}lder's inequality, as in the proof of Proposition \ref{pr:rapsi}.
Furthermore, even if $|s|=k$, all terms in the sum on the right-hand side of
(\ref{eq:FdiB}) for which $|\pi|<k$ can be treated in the same way.  (There are no
more than $k-1$ factors in the product, each of which is in $L^\lambda(\mu)$, and
$\lambda/(k-1)\ge \nu$.)  This leaves the terms for which $|\pi|=|s|=k$; in order
to show that these converge in $L^\nu(\mu)$ it suffices to show that, for any
$1\le i\le d$, the sequence $(|\psi^{(k)}(a_n)(a_n^\prime)^k|^\nu)$
is uniformly integrable, where, for any weakly differentiable $g:\R^d\rightarrow\R$,
$g^\prime:=\partial g/\partial x_i$.

Let $\rho\in C_0^\infty(\R^d;\R)$ be as defined in (\ref{eq:rhodef}), and let
\[
K_\rho:=\textstyle \sup_x(\rho(x)+2|\rho^\prime(x)|+|\rho^{\prime\prime}(x)|).
\]
Let $h:G_f\rightarrow L^1(\mu)$ be defined by
$h(a)=|a|+|a^\prime|+(|a|+|a^\prime|+|a^{\prime\prime}|)^\lambda$; then
$h(K_\rho a_n)\rightarrow h(K_\rho a)$ in $L^1(\mu)$ and so, according to the
Lebesgue-Vitaly theorem, $(h(K_\rho a_n))$ is a uniformly integrable
sequence.  So, according to the de la Vall\'{e}e Poussin theorem, there exists a
convex increasing function $\Ftil:[0,\infty)\rightarrow[0,\infty)$ such that
$\Htil(z):=\Ftil(z)/z$ is an unbounded, non-decreasing function and
$\sup_n\Emu\Ftil(h(K_\rho a_n))<\infty$.  Let $H:[0,\infty)\rightarrow[0,\infty)$
be defined by
\begin{equation}
H(z) = \left\{\begin{array}{ll} 0 & {\rm if\ }z=0 \\
         z^{-1}\int_0^z \Htil(y)\,dy & {\rm otherwise}.\end{array} \right\}
    \le \Htil(z) \label{eq:newHdef}
\end{equation}
For any $y\in[0,\infty)$, let $z_y:=\inf\{z\in[0,\infty):\Htil(z)\ge y\}$; for
any $z>2z_y$,
\[
H(z) = z^{-1}\int_0^{z_y}\Htil(t)\,dt + z^{-1}\int_{z_y}^z\Htil(t)\,dt
       \ge (z-z_y)y/z \ge y/2,
\]
and so $H$ is also unbounded.  Furthermore
\begin{equation}
zH^{(1)}(z) = \Htil(z)-H(z)\in[0,\Htil(z)]. \label{eq:zHonebnd}
\end{equation}
Summarising the above, $H$ is unbounded, non-decreasing and differentiable, and so
$F:[0,\infty)\rightarrow[0,\infty)$, defined by $F(z)=zH(z)$ is another de la
Vall\'{e}e Poussin function for which $\sup_n\Emu F(h(K_\rho a_n))<\infty$.

Let $G:[0,\infty)\rightarrow[0,\infty)$ be defined by $G(z)=zH(|z/C|^{1/k\nu})$,
where $C:=\sup_z|\psi^{(k)}(z)|^\nu$; then, for any $f\in C_0^\infty(\R^d;\R)$,
\begin{eqnarray}
\Emu G\left(|\psi^{(k)}(f)|^\nu |f^\prime|^{k\nu}\right)
  & \le & K_1\Emu|\psi^{(k)}(f)||f^\prime|^{k\nu}H(|f^\prime|) \nonumber \\
  & \le & K_2\Emu\frac{\psi^{(1)}(f)}{(1+\psi(f))^k}|f^\prime|^{k\nu} H(|f^\prime|)
          \nonumber \\
  &  =  & K_2\int\frac{\psi(f)^\prime}{(1+\psi(f))^k}f^\prime
            |f^\prime|^{k\nu-2}H(|f^\prime|)r\, dx \nonumber \\
  &  =  & K_3\int(1+\psi(f))^{1-k}\bigg[
            |f^\prime|^{k\nu-2}f^{\prime\prime}H(|f^\prime|) \label{eq:parts1} \\
  &     & \quad + |f^\prime|^{k\nu-1}H^{(1)}(|f^\prime|)f^{\prime\prime}
            + f^\prime|f^\prime|^{k\nu-2}H(|f^\prime|)l^\prime\bigg]r\, dx
            \nonumber \\
  & \le & K_3(R(f)+S(f)+T(f)), \nonumber
\end{eqnarray}
where $K_1$ and $K_2$ depend only on the function $\psi$, $K_3/K_2=(k\nu-1)/(k-1)$
and $R(f)$, $S(f)$ and $T(f)$ are as follows:
\begin{eqnarray}
R(f)
  & := & \Emu|f^\prime|^{k\nu-2}|f^{\prime\prime}|H(|f^\prime|),\nonumber \\
S(f)
  & := & \Emu|f^\prime|^{k\nu-1}H^{(1)}(|f^\prime|)|f^{\prime\prime}|
         \le \Emu|f^\prime|^{k\nu-2}|f^{\prime\prime}|\Htil(|f^\prime|),
         \label{eq:Sbnd} \\
T(f)
  & := & \Emu|f^\prime|^{k\nu-1}H(|f^\prime|)|l^\prime|. \nonumber
\end{eqnarray}
In (\ref{eq:parts1}), we have used the boundedness of $\psi^{(k)}$ in the first step,
Lemma \ref{le:psidif}(ii) in the second step and integration by parts with respect
to $x_i$ in the fourth step. (If $t=1$ in Example \ref{ex:muex}(i), then
$\theta_t(|\cdot|)$ is not differentiable at $0$ and the integration by parts has to
be accomplished separately on the two sub-intervals $(-\infty,0)$ and $(0,\infty)$.)
In (\ref{eq:Sbnd}), we have used (\ref{eq:zHonebnd}).  Let
$a_{m,n}:=a_n(x)\rho(x/m)\in\cu_m$; then, with $J$ as defined in section
\ref{se:modspa},
\begin{eqnarray*}
R(Ja_{m,n})
  & \le & \Emu F\left(|(Ja_{m,n})^\prime|
            +(|(Ja_{m,n})^\prime|+|(Ja_{m,n})^{\prime\prime}|)^\lambda\right) \\
  &  =  & \Emu F\left(|Ja_{m,n}^\prime|
            +(|Ja_{m,n}^\prime|+|Ja_{m,n}^{\prime\prime}|)^\lambda\right) \\
  & \le & \Emu JF\left(|a_{m,n}^\prime|
            +(|a_{m,n}^\prime|+|a_{m,n}^{\prime\prime}|)^\lambda\right) \\
  & \le & \Emu F\left(|a_{m,n}^\prime|
            +(|a_{m,n}^\prime|+|a_{m,n}^{\prime\prime}|)^\lambda\right) + 1/m \\
  & \le & \Emu F(h(K_\rho a_n)) + 1/m,
\end{eqnarray*}
where we have used the definition of $F$ in the first step, Lemma
\ref{le:molprop}(ii) in the second step, Jensen's inequality in the third step,
Lemma \ref{le:molprop}(i) in the fourth step and (\ref{eq:Dsfrho}) in the final
step.  Similar bounds can be found for $S(Ja_{m,n})$ and, if $t\in(0,1]$ (so that
$l^\prime$ is bounded), $T(Ja_{m,n})$.

If $t\in(1,2]$ we note that
\[
\left(|f^\prime|^\lambda H(|f^\prime|)\right)^\prime
  = \lambda|f^\prime|^{\lambda-1}{\rm sgn}(f^\prime)f^{\prime\prime}H(|f^\prime|)
    + |f^\prime|^\lambda H^{(1)}(|f^\prime|){\rm sgn}(f^\prime)f^{\prime\prime},
\]
so that $\Emu|(|f^\prime|^\lambda H(|f^\prime|))^\prime|\le\lambda R(f) + S(f)$,
and
\[
\||(Ja_{m,n})^\prime|^\lambda H(|(Ja_{m,n})^\prime|)\|_{W^{1,(1,1)}(\mu)}
  \le (\lambda+2)(\Emu\Ftil(h(K_\rho a_n))+1/m).
\]
Let $\alpha:=(t-1)/t$, and let $L^1\log^\alpha L(\mu)$ and $\exp L^{1/\alpha}(\mu)$
be the complementary Orlicz spaces defined in the proof of Lemma \ref{le:superp}.
It follows from Theorem 7.12 in \cite{cipi1} that, for some $K_4<\infty$ not
depending on $m$ or $a_n$,
\[
\||(Ja_{m,n})^\prime|^\lambda H(|(Ja_{m,n})^\prime|)\|_{L^1\log^\alpha L(\mu)}
  \le K_4(\lambda+2)(\Emu\Ftil(h(K_\rho a_n))+1/m).
\]
For any $|x_i|>z_t$, $l^\prime(x)=-t|x_i|^{t-1}{\rm sgn}(x_i)$, and so
$l^\prime\in\exp L^{1/\alpha}(\mu)$, and the generalised H\"{o}lder
inequality shows that, for some $K_5<\infty$
\[
T(Ja_{m,n})
 \le K_5\||(Ja_{m,n})^\prime|^\lambda H(|(Ja_{m,n})^\prime|)\|_{L^1\log^\alpha L(\mu)}
     \|l^\prime\|_{\exp L^{1/\alpha}(\mu)}.
\]
We have thus shown that, for any $t\in(0,2]$,
\begin{equation}
\Emu G\left(|\psi^{(k)}(Ja_{m,n})|^\nu |(Ja_{m,n})^\prime|^{k\nu}\right)
  \le K_6(\Emu\Ftil(h(K_\rho a_n)) + 1/m), \label{eq:Gbnd}
\end{equation}
where $K_6<\infty$ does not depend on $m$ or $a_n$.  Since $G$ is a de la
Vall\'{e}e Poussin function, the sequence
$(|\psi^{(k)}(Ja_{m,n})|^\nu |(Ja_{m,n})^\prime|^{k\nu},\,m\in\N)$, for any fixed
$n$, is uniformly integrable and so, according to Lemma \ref{le:monoleb}, converges
in $L^1(\mu)$ to $|\psi^{(k)}(a_n)|^\nu |a_n^\prime|^{k\nu}$.  Fatou's theorem now
shows that
\[
\Emu G\left(|\psi^{(k)}(a_n)|^\nu |a_n^\prime|^{k\nu}\right)
  \le K_6\Emu\Ftil(h(K_\rho a_n)),
\]
which in turn shows that the sequence
$(|\psi^{(k)}(a_n)|^\nu |a_n^\prime|^{k\nu},\,n\in\N)$ is uniformly integrable.
So $\psi^{(k)}(a_n)(a_n^\prime)^k\rightarrow\psi^{(k)}(a)(a^\prime)^k$ in
$L^\nu(\mu)$, which completes the proof of part (ii).
\end{proof}

If we want all derivatives of $\psi(a)$ to be continuous maps from $G_f$ to
$L^\nu(\mu)$ (for some $\nu\ge 1$) then the fixed norm space $G_f$ should have
Lebesgue exponent $\lambda=\max\{2,\nu k-1\}$. (The resulting manifold will not
have a strong enough topology for global information geometry unless $\lambda\ge 2$.)
The mixed norm space $G_m$  requires $\lambda_1=\nu k$, $\lambda_2=\nu k/2$,
\ldots, $\lambda_k=\nu$.
This places a slightly higher integrability constraint on the first derivative, but
lower constraints on all other derivatives (significantly lower if $k\ge 3$).
Furthermore, if $G_f$ is used as a model space, then $\psi(a)$ and its first partial
derivatives actually belong to $L^\lambda(\mu)$, and so the true range of the
superposition operator in this context is a mixed norm space, whether or not we
choose to think about it in this way.

The case in which $\lambda=1$ is of particular interest.  Proposition \ref{pr:fixedn}
then shows that $\psi$ defines a nonlinear superposition operator that acts
continuously on $G_s:=W^{2,(1,1,1)}(\mu)$.  The use of such a low Lebesgue
exponent precludes the results in section \ref{se:manifmt} concerning the
smoothness of the KL-divergence.  In particular, we cannot expect to retain global
geometric constructs such as the Fisher-Rao metric.  However,
$\cd(\mu|\fndot):M_s\rightarrow[0,\infty)$ is still continuous for all $t\in(0,2]$,
and $\cd(\fndot|\mu)$ is finite if $t=2$.  Since $\psi^{(1)}$ is bounded, there is
no difficulty in extending these results as follows.

\begin{corol}
For any $\lambda_0\in[1,\infty)$, $\psi$ defines a nonlinear superposition operator
that acts continuously on $G_{ms}:=W^{2,(\lambda_0,1,1)}(\mu)$.
\end{corol}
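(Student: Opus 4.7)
The plan is to reduce the claim directly to Proposition \ref{pr:fixedn} (applied with $k=2$, $\lambda=1$, $\nu=1$), exploiting that the only difference between $G_{ms}=W^{2,(\lambda_0,1,1)}(\mu)$ and $G_s=W^{2,(1,1,1)}(\mu)$ is the Lebesgue exponent on the $0$-th order component. Since $\mu$ is a probability measure, $L^{\lambda_0}(\mu)$ embeds continuously into $L^1(\mu)$ for every $\lambda_0 \ge 1$, so $G_{ms}\subset G_s$ with the inclusion being continuous. In particular, any $a\in G_{ms}$ lies in $G_s$, and Proposition \ref{pr:fixedn} immediately gives $D^s\psi(a)\in L^1(\mu)$ for all $s\in S_1$ with $|s|\le 2$.

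Next I would verify the $0$-th order piece. Since $\psi^{(1)}$ is bounded by Lemma \ref{le:psidif}(ii), the mean value theorem applied as in (\ref{eq:psiba}) gives
\begin{equation*}
|\psi(a)| \le |\psi(0)| + K|a|, \qquad
|\psi(a)-\psi(b)| \le K|a-b|
\end{equation*}
pointwise, for some $K<\infty$. The first inequality yields $\psi(a)\in L^{\lambda_0}(\mu)$ whenever $a\in L^{\lambda_0}(\mu)$, so together with the derivative bounds from Proposition \ref{pr:fixedn} we obtain $\psi(a)\in G_{ms}$. This settles the ``acts on'' part.

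For continuity, suppose $a_n\to a$ in $G_{ms}$. The continuous inclusion $G_{ms}\hookrightarrow G_s$ yields $a_n\to a$ in $G_s$, and Proposition \ref{pr:fixedn}(ii) then gives $D^s\psi(a_n)\to D^s\psi(a)$ in $L^1(\mu)$ for $1\le |s|\le 2$. The $0$-th order convergence follows from the Lipschitz bound above:
\begin{equation*}
\|\psi(a_n)-\psi(a)\|_{L^{\lambda_0}(\mu)} \le K\|a_n-a\|_{L^{\lambda_0}(\mu)} \to 0.
\end{equation*}
Combining the two yields $\Psi(a_n)\to\Psi(a)$ in the $G_{ms}$ norm.

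There is essentially no obstacle here: the boundedness and Lipschitz property of $\psi$ (via bounded $\psi^{(1)}$) handle the $0$-th order term trivially at any exponent $\lambda_0$, while the delicate higher-order analysis, with its de la Vall\'{e}e Poussin/log-Sobolev machinery, has already been carried out once and for all in Proposition \ref{pr:fixedn}. The only thing to be careful about is invoking the embedding $L^{\lambda_0}(\mu)\hookrightarrow L^1(\mu)$, which relies on $\mu$ being a probability measure (built into the construction in section \ref{se:modspa}).
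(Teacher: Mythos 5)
Your proposal is correct and matches the paper's intended argument: the paper treats this corollary as an immediate extension of Proposition \ref{pr:fixedn} (the case $k=2$, $\lambda=\nu=1$), with the zeroth-order term handled by the boundedness of $\psi^{(1)}$, which is exactly the decomposition you make explicit via the continuous inclusion $G_{ms}\hookrightarrow G_s$ and the Lipschitz/linear-growth bounds. No gaps; you have simply written out the details the paper leaves implicit.
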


\begin{remark}
When the model space, $G$, is $\Gtil$, $G_s$ or $G_{ms}$, then condition (M2) can
be replaced by: (M2') $p,\log p\in G$.
\end{remark}

\section{The Manifolds of Probability Measures} \label{se:manifm}

In this section we shall assume that $\lambda_0>1$, or that $\lambda_0=1$ and the
embedding hypothesis (E1) holds.  Let $M_0\subset M$ be the subset of the general manifold of section \ref{se:manifmt} (that modelled on $G:=W^{k,\Lambda}(\mu)$),
whose members are {\em probability} measures.  These satisfy the additional
hypothesis:
\begin{enumerate}
\item[(M3)] $\Emu p = 1$.
\end{enumerate}
The co-dimension 1 subspaces of $L^\lambda(\mu)$ and $G$, whose members,
$a$, satisfy $\Emu a=0$ will be denoted $L_0^\lambda(\mu)$, and $G_0$.
Let $\phi_0:M_0\rightarrow G_0$ be defined by
\begin{equation}
\phi_0(P) = \phi(P) - \Emu\phi(P)
   = \log_d p - \Emu\log_d p.  \label{eq:phidef}
\end{equation}

\begin{prop} \label{pr:bijec}
\begin{enumerate}
\item[(i)] $\phi_0$ is a bijection onto $G_0$.  Its inverse takes the form
  \begin{equation}
  p(x) = \frac{d\phi_0^{-1}(a)}{d\mu}(x) = \psi(a(x)+Z(a)), \label{eq:phiinv}
  \end{equation}
  where $Z\in C^N(G_0;\R)$ is an (implicitly defined) normalisation function,
  and $N=N(\lambda_0,\lambda_1,1,t)$ is as defined in (\ref{eq:Ndef}).
\item[(ii)] The first (and if $N\ge 2$ second) derivative of $Z$ is as follows:
  \begin{eqnarray}
  Z_a^{(1)}u & = & -\E_{P_a}u  \nonumber \\
          [-1.5ex] \label{eq:Zder} \\ [-1.5ex]
  Z_a^{(2)}(u,v)
    & = & -\frac{\Emu\psi^{(2)}(a+Z(a))(u-\E_{P_a}u)(v-\E_{P_a}v)}
           {\Emu\psi^{(1)}(a+Z(a))}, \nonumber
  \end{eqnarray}
  where $P_a:=\Ptil_a/\Ptil_a(\R^d)$ and $\Ptil_a$ is the finite measure defined
  before (\ref{eq:bunchartt}).
\item[(iii)] If $\lambda_0-1\in\N$ and (E1) does not hold then $Z^{(\lambda_0-1)}$
  is Leslie differentiable (with derivative is as in (\ref{eq:Zder}) if
  $\lambda_0=2$).
\item[(iv)] $Z$ and any derivatives it admits are bounded on bounded sets.
\end{enumerate}
\end{prop}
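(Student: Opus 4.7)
My plan is to derive all four parts from the implicit function theorem applied to a scalar normalization equation, supplemented by direct arguments for the bounds. For part (i), I would first establish existence and uniqueness of $Z(a)$ for each $a\in G_0$ by analyzing the scalar function $h_a(z):=\Emu\psi(a+z)$. Strict positivity of $\psi^{(1)}$ (Lemma \ref{le:psidif}(i)) makes $h_a$ strictly increasing and continuous; linear growth of $\psi$ (Lemma \ref{le:superp}(iii)) and dominated convergence against $\psi(a)\in L^1(\mu)$ give $h_a(z)\to 0$ as $z\to-\infty$, while monotone convergence gives $h_a(z)\to+\infty$ as $z\to+\infty$. The intermediate value theorem then defines $Z(a)$ uniquely by $h_a(Z(a))=1$. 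Surjectivity of $\phi_0$ follows by setting $p:=\psi(a+Z(a))$, which yields $P\in M_0$ with $\phi_0(P)=(a+Z(a))-Z(a)=a$ (using $\Emu a=0$); injectivity follows because any two preimages of $a$ would have densities $\psi(a+z)$ and $\psi(a+z')$ both integrating to $1$, forcing $z=z'$ by monotonicity of $h_a$.

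For the smoothness claim in (i) and the derivative formulas in (ii), I would apply the standard Banach-space implicit function theorem to $F:G_0\times\R\to\R$ defined by $F(a,z)=\Emu\psi(a+z)-1$. The affine embedding $(a,z)\mapsto a+z$ (with $z$ regarded as a constant function) sends $G_0\times\R$ continuously into $G$, and the composition $F=\Emu\circ\Psi_1-1$ inherits $C^N$-regularity from Lemma \ref{le:superp}(i) with $\beta=1$. Since $\partial_z F(a,z)=\Emu\psi^{(1)}(a+z)>0$ everywhere, the IFT yields $Z\in C^N(G_0;\R)$ globally. Implicit differentiation of $F(a,Z(a))=0$ then gives $Z^{(1)}_a u=-\Emu\psi^{(1)}(a+Z(a))u/D(a)=-\E_{P_a}u$ where $D(a):=\Emu\psi^{(1)}(a+Z(a))$; differentiating once more and simplifying using this first-order formula produces the claimed second-order expression in (\ref{eq:Zder}).

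For (iii), when $\lambda_0-1\in\N$ and (E1) fails, we have $N=\lambda_0-1$, and Lemma \ref{le:superp}(ii) asserts Leslie differentiability of $\Psi_1^{(N)}$. I would extend the implicit-function argument to this weaker notion: along a perturbation $(t_n,v_n)\to(0,u_{N+1})$ in the sense of Lemma \ref{le:superp}(ii), the incremental quotient for $Z^{(N)}$ reduces to one involving $F^{(N)}$ on the curve $a+t_nv_n$, whose Leslie derivative is inherited from $\Psi_1^{(N)}$. Passing to the limit in each integral term (using boundedness and convergence in probability) then yields Leslie differentiability of $Z^{(N)}$, and when $\lambda_0=2$ the computation of the resulting derivative recovers the closed form (\ref{eq:Zder}) with $\psi^{(2)}$.

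For (iv), the upper bound $Z(a)\le 0$ is immediate from convexity of $\psi$ via Jensen's inequality: $1=\Emu\psi(a+Z(a))\ge\psi(\Emu a+Z(a))=\psi(Z(a))$. For the lower bound, I would split $\Emu\psi(a+z)=\int_{\{|a|\le -z/2\}}\psi(a+z)\,d\mu+\int_{\{|a|>-z/2\}}\psi(a+z)\,d\mu$; the first integral is bounded above by $\psi(z/2)\to 0$, and the second is controlled uniformly over $\|a\|_G\le R$ by combining the linear growth of $\psi$ with Markov's inequality on $a\in L^{\lambda_0}(\mu)$ (giving uniform $L^1$-equi-integrability of $\{\psi(a)\}$), yielding $Z(a)\ge -C(R)$. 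The main technical obstacle is the boundedness of the derivatives of $Z$ on bounded sets: their explicit formulas require $D(a)$ to be bounded below. I would establish this via the Cauchy-Schwarz bound $1=\Emu p\le D(a)^{1/2}(\Emu p(1+p))^{1/2}$, where $p=\psi(a+Z(a))$, reducing the task to a uniform $L^2$-bound on $p$; this follows from linear growth of $\psi$ and the $L^2$-control on $a+Z(a)$ when $\lambda_0\ge 2$, and from the log-Sobolev embedding used in the proof of Lemma \ref{le:superp} under (E1) otherwise.
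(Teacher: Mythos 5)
Your parts (i)--(iii) follow essentially the same route as the paper: apply the implicit function theorem to $\Upsilon(a,z)=\Emu\psi(a+z)$, whose $C^N$-regularity comes from Lemma \ref{le:superp} with $\beta=1$ and whose $z$-derivative $\Emu\psi^{(1)}(a+z)$ is strictly positive; identify the range of $\Upsilon(a,\fndot)$ by convexity/monotone (or dominated) convergence; obtain (\ref{eq:Zder}) by implicit differentiation; and inherit Leslie differentiability of $Z^{(\lambda_0-1)}$ from Lemma \ref{le:superp}(ii) together with the chain and quotient rules, exactly as the paper asserts. The real divergence is part (iv), which the paper disposes of by citing Proposition 4.1 of \cite{newt6}, whereas you attempt a self-contained proof --- and that is where there is a genuine gap.

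Your lower bound for $D(a):=\Emu\psi^{(1)}(a+Z(a))$ rests on the Cauchy--Schwarz step $1=\Emu p\le D(a)^{1/2}(\Emu p(1+p))^{1/2}$, which is useful only if $\Emu p^2$ is finite and bounded on bounded sets. Since $Z(a)\le 0$ and $\psi^{(1)}\le 1$ give $p=\psi(a+Z(a))\le 1+|a|$, this amounts to a uniform $L^2(\mu)$ bound on $a$, which you have only when $\lambda_0\ge 2$. Your fallback for the remaining cases --- ``the log-Sobolev embedding used in the proof of Lemma \ref{le:superp} under (E1)'' --- does not deliver this: Theorem 7.12 of \cite{cipi1} upgrades an $L^{\lambda_0}$ bound only by a logarithmic factor (to an $L^{\lambda_0}\log^\alpha L$-type bound), not to $L^2$. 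So for $1\le\lambda_0<2$, which is admitted in section \ref{se:manifm} and for which $N\ge 1$, your bound on $\Emu p^2$ is unavailable and the Cauchy--Schwarz step yields nothing, while the lower bound on $D(a)$ is needed already for the boundedness of $Z^{(1)}$ on bounded sets. A milder instance of the same issue affects your lower bound for $Z$: Markov's inequality from an $L^1$ bound alone does not give the claimed equi-integrability when $\lambda_0=1$. The repair is to argue via uniform integrability rather than $L^2$: for $\|a\|_G\le R$ the family $\{|a|\}$ is uniformly $\mu$-integrable (by H\"{o}lder when $\lambda_0>1$, and from the Orlicz-space bound supplied by the embedding when $\lambda_0=1$ under (E1)); then, using $p\le 1+|a|$, one has $D(a)=\Emu\,p/(1+p)\ge(2+T)^{-1}\big(1-\Emu(1+|a|)\indic_{\{|a|>T\}}\big)$, which is bounded away from zero for $T=T(R)$ large, and the same uniform integrability closes the tail estimate in your lower bound for $Z$. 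Alternatively, simply cite Proposition 4.1 of \cite{newt6}, as the paper does.
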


\begin{proof}
Let $\Upsilon:G_0\times\R\rightarrow(0,\infty)$ be defined by
\begin{equation}
\Upsilon(a,z) = \Emu\psi(a+z) = \Emu\Psi_1(a+z),
\end{equation}
where $\Psi_\beta$ is as in Lemma \ref{le:superp}.  It follows from Lemma
\ref{le:superp}, that $\Upsilon$ is of class $C^N$ and that, for any $u\in G_0$,
\begin{equation}
\Upsilon_{a,z}^{(1,0)}u = \Emu\psi^{(1)}(a+z)u \quad{\rm and}\quad
\Upsilon_{a,z}^{(0,1)} = \Emu\psi^{(1)}(a+z) > 0. \label{eq:Upsder}
\end{equation}
Since $\psi$ is convex,
\[
\sup_z\Upsilon(a,z) \ge \sup_z\psi(\Emu(a+z)) = \sup_z\psi(z) = +\infty;
\]
furthermore, the monotone convergence theorem shows that
\[
\lim_{z\downarrow-\infty} \Upsilon(a,z) = \Emu\lim_{z\downarrow-\infty}\psi(a+z) = 0.
\]
So $\Upsilon(a,\fndot)$ is a bijection with strictly positive derivative, and the
inverse function theorem shows that it is a $C^N$-isomorphism.  The implicit
mapping theorem shows that $Z:G_0\rightarrow\R$, defined by
$Z(a)=\Upsilon(a,\fndot)^{-1}(1)$, is of class $C^N$.  For some $a\in G_0$, let $P$
be the probability measure on $\cx$ with density $p=\psi(a+Z(a))$; then $\phi_0(P)=a$
and $P\in M_0$, which completes the proof of part (i).

That the first derivative of $Z$ is as in (\ref{eq:Zder}) follows from
(\ref{eq:Upsder}).  Since $\Emu\psi^{(1)}(a+Z(a))>0$, parts (ii) and (iii) follow
from Lemma \ref{le:superp} and the chain and quotient rules of differentiation
(which hold for Leslie derivatives).  Part (iv) is proved in Proposition 4.1 in
\cite{newt6}. 
\end{proof}

Expressed in charts, the inclusion map $\imath:M_0\rightarrow M$ is as follows
\begin{equation}
\rho(a) := \phi\circ\phi_0^{-1}(a) = a + Z(a),  \label{eq:inclus}
\end{equation}
and has the same smoothness properties as $Z$.  The following goes further.

\begin{prop} \label{pr:embed}
$(M_0,G_0,\phi_0)$ is a $C^N$-embedded submanifold of $(M,G,\phi)$, where
  $N=N(\lambda_0,\lambda_1,1,t)$ is as defined in (\ref{eq:Ndef}).
\end{prop}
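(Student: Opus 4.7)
The plan is to produce a global $C^N$-diffeomorphism $\eta$ of the ambient model space $G$ that straightens the image $\phi(M_0)$ onto the closed subspace $G_0\subset G$. Composing with $\phi$ will yield a new, equivalent chart $\tilde\phi$ of $(M,G,\phi)$ whose restriction to $M_0$ coincides with $\phi_0$, which is exactly the slice-chart criterion for a $C^N$-embedded submanifold.

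First, because $\mu$ is a probability measure, the constant function $\indic$ lies in every dual Lebesgue space and the functional $b\mapsto\Emu b$ is continuous and linear on $G$. Hence $G_0=\ker\Emu$ is a closed hyperplane, the decomposition $G=G_0\oplus\R\indic$ is a topological direct sum, and $\pi(b):=b-(\Emu b)\indic$ is a bounded linear projection of $G$ onto $G_0$.

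Next, I define $\eta:G\to G$ by $\eta(b):=b-Z(\pi(b))\indic$. Because $\pi$ is bounded linear and $Z:G_0\to\R$ is of class $C^N$ by Proposition \ref{pr:bijec} (with Leslie differentiability at the top order in the marginal case covered by part (iii)), the chain rule shows that $\eta$ shares this regularity. A direct computation gives $\pi\circ\eta=\pi$, so the equation $\eta(b)=c$ is solved explicitly by $b=c+Z(\pi(c))\indic$, providing a $C^N$-inverse of the same regularity. Thus $\eta$ is a $C^N$-diffeomorphism of $G$, and $\tilde\phi:=\eta\circ\phi$ is a chart for $M$ defining the same manifold structure as $\phi$.

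To verify the slice condition I invoke only the formula (\ref{eq:inclus}). For $P\in M_0$, $\phi(P)=\phi_0(P)+Z(\phi_0(P))\indic$ with $\phi_0(P)\in G_0$, so $\pi(\phi(P))=\phi_0(P)$ and hence $\tilde\phi(P)=\phi_0(P)\in G_0$. Conversely, if $\tilde\phi(P)\in G_0$ then $\Emu\phi(P)=Z(\pi(\phi(P)))$, so $\phi(P)=a+Z(a)\indic$ with $a:=\pi(\phi(P))\in G_0$, and $\Emu p=\Emu\psi(a+Z(a))=1$ by the defining property of $Z$, i.e.~$P\in M_0$. Since $\tilde\phi:M\to G$ is a bijection, $\tilde\phi(M_0)=G_0$ and $\tilde\phi|_{M_0}=\phi_0$, exhibiting $(M_0,G_0,\phi_0)$ as the $C^N$-embedded submanifold induced by the slice chart $\tilde\phi$. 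No substantive obstacle arises: the argument reduces to the graph structure $\phi(M_0)=\{a+Z(a)\indic:a\in G_0\}$ together with the codimension-one splitting $G=G_0\oplus\R\indic$, with all required smoothness already established in Proposition \ref{pr:bijec}.
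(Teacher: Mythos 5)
Your argument is correct, and it proves the proposition by a genuinely different route from the paper. You build a global \emph{slice chart}: using the bounded projection $\pi(b)=b-(\Emu b)\indic$ along the complemented hyperplane $G_0=\ker\Emu$, you straighten $\phi(M_0)=\{a+Z(a)\indic:a\in G_0\}$ via the $C^N$-diffeomorphism $\eta(b)=b-Z(\pi(b))\indic$ of $G$ (its inverse $c\mapsto c+Z(\pi(c))\indic$ is explicit, and both directions inherit the $C^N$ regularity of $Z$ from Proposition \ref{pr:bijec}), so that $\tilde\phi=\eta\circ\phi$ maps $M_0$ exactly onto $G_0$ with $\tilde\phi|_{M_0}=\phi_0$; the converse inclusion correctly uses the defining identity $\Emu\psi(a+Z(a))=1$. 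The paper instead works with the inclusion written in charts, $\rho(a)=a+Z(a)$, and verifies the embedding definition directly: composing with the centering map $a\mapsto a-\Emu a$ shows $\rho$ is a homeomorphism onto its image and $\rho_a^{(1)}$ is a toplinear isomorphism onto a closed subspace, and it then exhibits the explicit complement $E_a=\{y\psi^{(1)}(\rho(a)):y\in\R\}$ to show the derivative splits $G$. Your approach buys economy: the complement $\R\indic$ is fixed and manifest, no pointwise splitting argument is needed, and the submanifold property follows at once from the slice criterion with a single global chart. The paper's approach buys the explicit description of the tangent splitting $T_PM=T_PM_0\oplus\{y\hat U\}$ with $\hat U\phi=\psi^{(1)}(\phi(P))$, i.e.\ equation (\ref{eq:tansplit}), which is used immediately afterwards; if you adopt your proof, that splitting would need to be derived separately (a short extra computation, as in the paper's $E_a$ argument). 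One cosmetic point: your $\eta$ clashes with the symbol $\eta$ used in the paper's own proof for the centering map, so rename one of them if the texts are to coexist.
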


\begin{proof}
Let $\eta:G\rightarrow G_0$ be the superposition operator defined by
$\eta(a)(x)=a(x)-\Emu a$; then $\eta$ is of class $C^\infty$, has first
derivative $\eta_a^{(1)}u=u-\Emu u$, and zero higher derivatives.
Now $\eta\circ\rho$ is the identity map of $G_0$, which shows that $\rho$ is
homeomorphic onto its image, $\rho(G_0)$, endowed with the relative topology.
Furthermore, for any $u\in G_0$,
\[
u=(\eta\circ\rho)_a^{(1)}u=\eta_{\rho(a)}^{(1)}\rho_a^{(1)}u,
\]
and so $\rho_a^{(1)}$ is a toplinear isomorphism, and its image, $\rho_a^{(1)}G_0$,
is a closed linear subspace of $G$.  Let $E_a$ be the one dimensional
subspace of $G$ defined by $E_a=\{y\psi^{(1)}(\rho(a)): y\in\R\}$.  If $u\in E_a$
and $v\in\rho_a^{(1)}G_0$ then there exist $y\in\R$ and $w\in G_0$ such that
\[
\Emu uv = y\Emu\psi^{(1)}(\rho(a))(w-\E_{P_a}w) = 0.
\]
So $E_a\cap\rho_a^{(1)}G_0=\{0\}$, and $\rho_a^{(1)}$ {\em splits} $G$ into
the direct sum $E_a\oplus \rho_a^{(1)}G_0$.  We have thus shown that $\rho$ is a
$C^N$-immersion, and this completes the proof.
\end{proof}

For any $P\in M_0$, the tangent space $T_PM_0$ is a subspace of $T_PM$ of
co-dimension 1; in fact, as shown in the proof of Proposition \ref{pr:embed},
\begin{equation}
T_PM = T_PM_0 \oplus \{y\hat{U},\,y\in\R\}, \quad
       {\rm where\ }\hat{U}\phi=\psi^{(1)}(\phi(P)). \label{eq:tansplit}
\end{equation}
Let $\Phi_0:TM_0\rightarrow G_0\times G_0 $ be defined as follows:
\begin{equation}
\Phi_0(P,U) = \Phi(P,U)-\Emu\Phi(P,U). \label{eq:bunchart}
\end{equation}
Then $\Phi\circ\Phi_0^{-1}(a,u)=(\rho(a),\rho_a^{(1)}u)$.  For any $(P,U)\in TM_0$,
$U\phi=\rho_a^{(1)}u=u-\E_{P_a}u$, and so tangent vectors in $T_PM_0$ are
distinguished from those merely in $T_PM$ by the fact that their total mass is zero:
\begin{equation}
U(\R^d) = \int (u-\E_{P_a}u)d\Ptil_a = 0.  \label{eq:zerowe}
\end{equation}

The map $Z$ of (\ref{eq:phiinv}) is (the negative of) the additive normalisation
function, $\alpha$, associated with the interpretation of $M_0$ as a {\em generalised
exponential model} with deformed exponential function $\psi$.  (See Chapter 10 in
\cite{naud1}.  We use the symbol $Z$ rather than $-\alpha$ for reasons of consistency
with \cite{newt4,newt6}.)  In this context, the probability measure
$P_a$ of (\ref{eq:Zder}) is called the {\em escort} measure to $P$.  In \cite{mopi1},
the authors considered {\em local} charts on the Hilbert manifold of \cite{newt4}. 
In the present context, these take the form $\phi_P:M_0\rightarrow G_P$, where $G_P$
is the subspace of $G$ whose members, $b$, satisfy $\E_{P_a}b=0$.  This amounts to
re-defining the origin of $G$ as $\phi(P)$, and using the co-dimension 1 subspace
that is tangential to the image $\phi(M_0)$ at this new origin as the model space.
This local chart is {\em normal} at $P$ for the Riemannian metric and Levi-Civita
parallel transport induced by the global chart $\phi$ on $M$.  However, the metric
differs from the Fisher-Rao metric on all fibres of the tangent bundle other than
that at $\mu$.

The equivalent on $M_0$ of the maps $m_\beta$ and $e_\beta$ of section
\ref{se:manifmt} are the maps
$m_{\beta,0}, e_{\beta,0}:M_0\rightarrow L_0^\beta(\mu)$, defined by
\begin{equation}
m_{\beta,0}(P) = m_\beta(P)\quad{\rm and}\quad
e_{\beta,0}(P) = e_\beta(P)-\Emu e_\beta(P).  \label{eq:medef}
\end{equation}
Their properties are developed in \cite{newt4}, and follow from those of $m_\beta$
and $e_\beta$.

\section{Application to Nonlinear Filtering} \label{se:NLF}

We sketch here an application of the manifolds of sections \ref{se:manifmt} and
\ref{se:manifm} to the nonlinear filtering problem discussed in section \ref{se:intro}.
An abstract filtering problem (in which $X$ is a Markov process evolving
on a measurable space) was investigated in \cite{newt5}.  Under suitable technical
conditions, it was shown that the $(Y_s,\,0\le s\le t)$-conditional distribution
of $X_t$, $\Pi_t$, satisfies an infinite-dimensional stochastic differential equation
on the Hilbert manifold of \cite{newt4}, and this representation was used to study
the filter's information-theoretic properties.  This equation involves the
normalisation constant $Z$, which is difficult to use since it is implicitly defined,
and so it is of interest to use a manifold of finite measures not involving $Z$,
such as $M$ of section \ref{se:manifmt}.  Because of its special connection with
the function $\psi$, the mixed norm model space $\Gtil$ of section \ref{se:gtil} is
of particular interest, although the fixed norm spaces of section \ref{se:fixnor}
could also be used.

If the conditional distribution $\Pi_t$ has a density with respect to Lebesgue
measure, $p_t$, satisfying the Kushner-Stratonovich equation (\ref{eq:kusteq}), then
its density with respect to $\mu$, $\pi_t=p_t/r$, also satisfies (\ref{eq:kusteq}),
but with the transformed forward operator:
\begin{equation}
\ca\pi = \frac{1}{2r}\frac{\partial^2\Gamma^{ij}r\pi}{\partial x^i\partial x^j}
          - \frac{1}{r}\frac{\partial f^ir\pi}{\partial x^i},  \label{eq:frwdop}
\end{equation}
where $\Gamma=gg^*$ and we have used the Einstein summation convention. The density
$\pi_t$ also satisfies
\begin{equation}
d\pi_t
  = \ca \pi_t\,dt + \pi_t (h-\hhbar(\pi_t))(dY_t-\hhbar(\pi_t)dt),  \label{eq:nornlf}
\end{equation}
where, for appropriate densities $p$, $\hhbar(p):=(\Emu p)^{-1}\Emu ph$.
Unlike (\ref{eq:kusteq}), this equation is {\em homogeneous}, in the sense that
if $\pi_t$ is a solution then so is $\alpha \pi_t$, for any $\alpha>0$.  A
straightforward formal calculation shows that $\log_d\pi_t$ satisfies the following
stochastic partial differential equation
\begin{equation}
da_t = \bfu(\fndot,a_t)dt
       + \bfv(\fndot,a_t)(dY_t-\hhbar(\psi(a_t))dt),\label{eq:gtilnlf}
\end{equation}
where
\begin{eqnarray}
\bfv(x,a)
  & = & (1+\psi(a(x)))(h(x)-\hhbar(\psi(a))), \nonumber \\
\bfu(x,a)
  & = & \half\Gamma^{ij}(x)\left[\frac{\partial^2a}{\partial x^i\partial x^j}(x)
        + (1+\psi(a(x)))^{-2}\frac{\partial a}{\partial x^i}(x)
          \frac{\partial a}{\partial x^j}(x)\right] \label{eq:frwdphi} \\
  &   & \quad + F^i(x)\frac{\partial a}{\partial x^i}(x) + (1+\psi(a(x)))F^0(x)
        -\half[h(x)-\hhbar(\psi(a))]^2,  \nonumber
\end{eqnarray}
and
\begin{eqnarray*}
F^i
  & = & \Gamma^{ij}\frac{\partial l}{\partial x^j}
           + \frac{\partial \Gamma^{ij}}{\partial x^j} - f^i, \nonumber \\
F^0
  & = & \half\frac{\partial^2\Gamma^{ij}}{\partial x^i\partial x^j}
           + \frac{\partial\Gamma^{ij}}{\partial x^i}\frac{\partial l}{\partial x^j}
           + \half\Gamma^{ij}\left[\frac{\partial^2l}{\partial x^i\partial x^j}
             + \frac{\partial l}{\partial x^i}\frac{\partial l}{\partial x^j}\right]
           - f^i\frac{\partial l}{\partial x^i} - \frac{\partial f^i}{\partial x^i}.
           \nonumber
\end{eqnarray*}
In order to make sense of (\ref{eq:gtilnlf}) and (\ref{eq:frwdphi}), we need further
hypotheses.  The following are used for illustration purposes, and are not intended
to be ripe.
\begin{enumerate}
\item[(F1)] $\mu$ is the smooth distribution of Example \ref{ex:muex}(ii) with $t=1$,
  and $\lambda_0\ge 2$.
\item[(F2)] The functions $f$, $g$ and $h$ are of class $C^\infty(\R^d)$.
\item[(F3)] The functions $f$ and $h$, and all their derivatives, satisfy polynomial
  growth conditions in $|x|$.
\item[(F4)] The function $g$ and all its derivatives are bounded.
\end{enumerate}
In particular, these allow $\hhbar$, $\bfu$ and $\bfv$ to be defined on $M$ in
a precise way.

\begin{prop} \label{pr:coefrep}
\begin{enumerate}
\item[(i)] The functional $\Hbar:\Gtil\rightarrow\R$, defined by
  $\Hbar(a)=\hhbar(\psi(a))$, is of class $C^{\lceil\lambda_0\rceil-1}$.
\item[(ii)] Let $k\ge 2$ and $\lambda_1\ge 2k$.  If $a\in\Gtil$ then
  $\bfu(\fndot,a),\bfv(\fndot,a)\in H^{k-2}(\mu)$, where $H^{k-2}(\mu)$ is the Hilbert
  Sobolev space of Remark \ref{re:hilbert}.
\item[(iii)] The superposition operators $\bfU,\bfV:\Gtil\rightarrow H^{k-2}(\mu)$,
  defined by $\bfU(a)(x)=\bfu(x,a)$ and $\bfV(a)(x)=\bfv(x,a)$, are continuous.
\end{enumerate}
\end{prop}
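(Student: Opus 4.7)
The plan for (i) is to write $\Hbar(a) = A(a)/B(a)$ with $A(a) := \Emu\psi(a)h$ and $B(a) := \Emu\psi(a)$, and apply a quotient rule. Under (F1)--(F3), the exponential-type tails of $\mu$ together with the polynomial growth of $h$ give $h \in L^p(\mu)$ for every $p \in [1,\infty)$. Setting $N := \lceil\lambda_0\rceil - 1$, I would choose $\beta \in (1, \lambda_0/N)$ (which exists because $\lambda_0 > N$), so that Lemma \ref{le:superp} gives $\Psi_\beta \in C^N(\Gtil; L^\beta(\mu))$; pointwise multiplication by the fixed element $h \in L^{\beta/(\beta-1)}(\mu)$ is continuous linear into $L^1(\mu)$, and integration is continuous linear into $\R$, so $A \in C^N$. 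For $B$, Lemma \ref{le:superp} with $\beta = 1$ gives $B \in C^N$ directly. Convexity and strict positivity of $\psi$ (Lemma \ref{le:psidif}) together with Jensen's inequality yield $B(a) \ge \psi(\Emu a) \ge \psi(-c\|a\|_{\Gtil}) > 0$ uniformly on bounded subsets of $\Gtil$, so the quotient rule delivers $\Hbar \in C^N$.

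For (ii), the key observation is that the hypothesis $\lambda_1 \ge 2k$ together with $\lambda_0 \ge \lambda_1$ forces $\lambda_{|\sigma|} \ge 2k/\max(|\sigma|,1) \ge 2$ for every $|\sigma| \le k$; since $\mu$ is a probability measure this gives $L^{\lambda_{|\sigma|}}(\mu) \hookrightarrow L^2(\mu)$. Proposition \ref{pr:rapsi}(i) places $\psi(a) \in \Gtil$, hence $D^\sigma\psi(a) \in L^{\lambda_{|\sigma|}}(\mu)$. I would expand $D^s\bfv(\fndot,a)$ by Leibniz for $|s|\le k-2$: each summand is $D^\sigma\psi(a)\cdot D^{s-\sigma}(h-\Hbar(a))$, whose second factor lies in every $L^p(\mu)$ by (F3), so H\"older places each summand in $L^2(\mu)$. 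For $\bfu$, the summands $\Gamma^{ij}\partial^2_{ij}a$, $F^i\partial_i a$, $(1+\psi(a))F^0$ and $(h-\Hbar(a))^2$ all fall to the same scheme, using (F2)--(F4) and the polynomial moments of $\mu$. The remaining piece $\Gamma^{ij}(1+\psi(a))^{-2}\partial_i a\,\partial_j a$ requires Leibniz combined with Fa\`a di Bruno applied to $y\mapsto(1+\psi(y))^{-2}$, whose derivatives are polynomials in the uniformly bounded functions $\psi^{(j)}$ and $(1+\psi)^{-m}$ of Lemma \ref{le:psidif}(ii); the expansion produces sums of products of $\Gamma^{ij}$, a bounded function of $a$, and $\prod_i D^{\tau_i}a$ with $\sum|\tau_i|\le|s|+2\le k$, which H\"older places in $L^{\lambda_1/k}(\mu)\subseteq L^2(\mu)$.

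For (iii), fixing $a_n\to a$ in $\Gtil$, Proposition \ref{pr:rapsi}(ii) yields $D^\sigma\psi(a_n)\to D^\sigma\psi(a)$ in $L^{\lambda_{|\sigma|}}(\mu)$ for every $\sigma$, and part (i) gives $\Hbar(a_n)\to\Hbar(a)$. The Fa\`a di Bruno / dominated-convergence argument used to prove Proposition \ref{pr:rapsi}(ii) applies verbatim to the superposition generated by any $C^\infty$ scalar function with uniformly bounded derivatives; applied to $y\mapsto(1+\psi(y))^{-2}$ (via Lemma \ref{le:psidif}(ii)) it gives $(1+\psi(a_n))^{-2}\to(1+\psi(a))^{-2}$ in $\Gtil$. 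Feeding these convergences into the Leibniz expansions of $D^s\bfV(a_n)-D^s\bfV(a)$ and $D^s\bfU(a_n)-D^s\bfU(a)$, with the same H\"older combinations as in (ii), yields $L^2(\mu)$-convergence for every $|s|\le k-2$, hence continuity of $\bfV$ and $\bfU$ into $H^{k-2}(\mu)$. I expect the main obstacle to be the $(1+\psi(a))^{-2}\partial_i a\,\partial_j a$ term of $\bfu$: the hypothesis $\lambda_1\ge 2k$ is sharp in forcing every resulting product to land in $L^2(\mu)$, and verifying that the alternate superposition inherits the continuity of Proposition \ref{pr:rapsi} requires a careful but essentially mechanical repetition of its proof using Lemma \ref{le:psidif}(ii).
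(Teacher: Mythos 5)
Your proposal is correct, and for part (i) it is essentially the paper's own argument: the paper takes $\beta=1+\epsilon$ with $\epsilon$ small enough that $\lceil\lambda_0/(1+\epsilon)\rceil=\lceil\lambda_0\rceil$, pairs against $h\in L^{(1+\epsilon)/\epsilon}(\mu)$ by H\"older, and finishes with the quotient rule and $\Emu\psi(a)>0$; your choice $\beta\in(1,\lambda_0/N)$, together with the remark that post-composition with the continuous linear operations ``multiply by $h$'' and ``integrate'' preserves class $C^N$, is the same idea (your Jensen bound giving positivity of $\Emu\psi(a)$ uniformly on bounded sets is more than is needed, but harmless). For (ii)--(iii) your general scheme also matches the paper's: Leibniz expansions, the mixed-norm H\"older count giving exponents $\lambda_1/(|s|+2)\ge\lambda_1/k\ge 2$, Proposition~\ref{pr:rapsi} for the convergence of $D^\sigma\psi(a_n)$, and part (i) for $\Hbar(a_n)$. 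Where you genuinely diverge is on the term the paper itself singles out as the hardest, $\Gamma^{ij}(1+\psi(a))^{-2}\partial_i a\,\partial_j a$: the paper uses the identity $(1+\psi(a))^{-1}\partial a/\partial x^i=\partial(a-\psi(a))/\partial x^i$ (a consequence of $\psi^{(1)}=\psi/(1+\psi)$) to turn the awkward prefactor into first derivatives of $a-\psi(a)$, so that Proposition~\ref{pr:rapsi} applies directly and no new superposition operator is needed; you instead treat $y\mapsto(1+\psi(y))^{-2}$ as a fresh Nemytskii operator and rerun the Fa\`a di Bruno/dominated-convergence argument, observing that it only uses boundedness of all derivatives of the outer function. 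That observation is true, and your exponent bookkeeping (total derivative order $\le|s|+2\le k$, so products land in $L^{\lambda_1/k}(\mu)\subseteq L^2(\mu)$) is right, so your route works; it is simply longer, since the ``applies verbatim'' claim must be checked, whereas the paper's substitution sidesteps it. One point worth making explicit in your treatment of $\bfv$, $F^i\partial_ia$ and $(1+\psi(a))F^0$: because $h$, $F^i$, $F^0$ are unbounded, placing each Leibniz summand in $L^2(\mu)$ requires a Lebesgue exponent strictly greater than $2$ on the $\psi(a)$ or $a$ factor, which is available because only derivatives of order at most $k-1$ of these appear in such terms (e.g.\ $\lambda_{|\sigma|}\ge 2k/(k-2)>2$ for $|\sigma|\le k-2$); the borderline exponent $\lambda_k=\lambda_1/k=2$ is only ever paired with the bounded coefficients $\Gamma^{ij}$.
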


\begin{proof}
It follows from (F1--F4) that
\begin{equation}
F^i,F^0,h,\in W^{k,(\lambda,\lambda,\ldots,\lambda)}(\mu)\quad
  {\rm for\ every\ }k\in\N,{\rm\ and\ every\ }\lambda\in[1,\infty). \label{eq:FGhbnd}
\end{equation}
Lemma \ref{le:superp} shows that, for any $\epsilon>0$, $\Psi_{1+\epsilon}$ is of
class $C^{\lceil\lambda_0/(1+\epsilon)\rceil-1}$.  For any $\lambda_0\in[2,\infty)$
there exists an $\epsilon>0$ such that
$\lceil\lambda_0/(1+\epsilon)\rceil=\lceil\lambda_0\rceil$ and so with this choice,
$\Psi_{1+\epsilon}$ is of class $C^{\lceil\lambda_0\rceil-1}$.
H\"{o}lder's inequality shows that, for any $0\le i\le\lceil\lambda_0\rceil-2$, any
$a,b\in\Gtil$ and any $u_1,\ldots, u_i$ in the unit ball of $\Gtil$,
\begin{eqnarray*}
\Emu\big|(\Psi_{1+\epsilon,b}^{(i)}-\Psi_{1+\epsilon,a}^{(i)}
  -\Psi_{1+\epsilon,a}^{(i+1)}(b-a))(u_1,\ldots,u_i)h\big| \qquad\qquad\qquad\qquad\\
  \qquad
  \le \big\|\Psi_{1+\epsilon,b}^{(i)}-\Psi_{1+\epsilon,a}^{(i)}
  -\Psi_{1+\epsilon,a}^{(i+1)}(b-a)\big\|_{L(\Gtil^i;L^{1+\epsilon}(\mu))}
    \|h\|_{L^{(1+\epsilon)/\epsilon}(\mu)}, \\
\Emu\big|(\Psi_{1+\epsilon,b}^{(i+1)}-\Psi_{1+\epsilon,a}^{(i+1)})h\big|
  \le \big\|\Psi_{1+\epsilon,b}^{(i+1)}-\Psi_{1+\epsilon,a}^{(i+1)}
        \big\|_{L(\Gtil^{i+1};L^{1+\epsilon}(\mu))}
        \|h\|_{L^{(1+\epsilon)/\epsilon}(\mu)},
\end{eqnarray*}
which shows that the map $\Gtil\ni a\mapsto\Emu\psi(a)h\in\R$ is of class
$C^{\lceil\lambda_0\rceil-1}$.  The quotient rule of differentiation and the fact
that $\Emu\psi(a)>0$ complete the proof of part (i).

Parts (ii) and (iii) can be proved by applying H\"{o}lder's inequality to the weak
derivatives of the various components of $\bfu(\fndot,a)$ and $\bfv(\fndot,a)$.  The
quadratic term in $\bfu$ is the most difficult to treat, and so we give a detailed
proof for this.  We begin by noting that
$(1+\psi(a))^{-1}\partial a/\partial x^i=\partial(a-\psi(a))/\partial x^i$.  For any
$|s|\le k-2$
\begin{equation}
D^s\frac{\partial\psi(a)}{\partial x^i}\frac{\partial\psi(a)}{\partial x^j}
  = \sum_{\sigma\le s}D^\sigma\frac{\partial\psi(a)}{\partial x^i}
    D^{s-\sigma}\frac{\partial\psi(a)}{\partial x^j}
    \prod_{1\le l\le d}\left(\begin{array}{c} s_l \\ \sigma_l\end{array}\right).
    \label{eq:proddif}
\end{equation}
According to Proposition \ref{pr:rapsi}, the nonlinear superposition operator
$\Psi_{\sigma,i}:\Gtil\rightarrow L^{\lambda_1/(|\sigma|+1)}(\mu)$ defined by
$\Psi_{\sigma,i}(a)=D^\sigma(\partial\psi(a)/\partial x^i)$ is continuous, and so it
follows from H\"{o}lder's inequality that the same is true of
$\Upsilon_{s,i,j}:\Gtil\rightarrow L^{\lambda_1/(|s|+2)}(\mu)$ defined by
the right-hand side of (\ref{eq:proddif}).  Together with (F4), this shows that
$\Upsilon:\Gtil\rightarrow H^{k-2}(\mu)$ defined by
$\Upsilon(a)=\Gamma^{ij}(\partial\psi(a)/\partial x^i)(\partial\psi(a)/\partial x^j)$
is continuous.

The other components of $\bfu(\fndot,a)$ and the only component of $\bfv(\fndot,a)$
can be shown to have the stated continuity by similar arguments.  These make use of
(\ref{eq:FGhbnd}), Proposition \ref{pr:rapsi} and part (i) here.
\end{proof}

\begin{remark}
There are many variants of this proposition, corresponding to different choices of
the domain and range of $\bfU$ and $\bfV$.  If $\lambda_0$ and $\lambda_1$ are
suitably large, then $\bfU$ and $\bfV$ admit various derivatives on $M$.
\end{remark}

One application of Proposition \ref{pr:coefrep} is in the development of
{\em projective approximations}, as proposed in the context of the exponential Orlicz
manifold in \cite{brpi1} and the earlier references therein.  As a particular
instance, suppose that $k\ge 2$ and $\lambda_1\ge 2k$; let
$(\eta_i\in C^k(\R^d)\cap\Gtil,\,1\le i\le m)$ be linearly independent, and define
\begin{equation}
G_{m,\eta} = \{a\in\Gtil:\,a=\alpha^i\eta_i{\rm\ for\ some\ }\alpha\in\R^m\}. \label{eq:fdspa}
\end{equation}
This is an $m$-dimensional linear subspace of both $\Gtil$ and $H^{k-2}(\mu)$.
We can use the inner product of $H^{k-2}(\mu)$ to project members of
$H^{k-2}(\mu)$ onto $G_{m,\eta}$.  In particular, we can project $\bfU(a)$ and
$\bfV(a)$ onto $G_{M,\eta}$ for any $a\in G_{m,\eta}$ to obtain continuous vector
fields of the finite-dimensional submanifold of $M$ defined by
$M_\eta=\phi^{-1}(G_{m,\eta})$.  Since the model space norms of $H^{k-2}(\mu)$
dominate the Fisher-Rao metric on every fibre of the tangent bundle
(\ref{eq:fishdom}), the projection takes account of the information theoretic cost
of approximation, as well as controlling the derivatives of the conditional density
$\pi_t$.

$M_\eta$ is a finite-dimensional {\em deformed exponential model}, and is trivially a
$C^\infty$-embedded submanifold of $M$.  Many other classes of finite-dimensional
manifold also have this property.  For example, since $\Psi(\Gtil)$ is
convex, certain finite-dimensional mixture manifolds modelled on the space
$G_{m,\eta}$, where $\eta_i\in\Psi(G_m)$, are also $C^\infty$-embedded submanifolds
of $M$.  This is also true of particular finite-dimensional exponential models.

\section{Concluding Remarks} \label{se:conclu}

This paper has developed a class of infinite-dimensional statistical manifolds that
use the balanced chart of \cite{newt4,newt6} in conjunction with a variety of
probability spaces of Sobolev type.  It has shown that the mixed-norm space of section
\ref{se:gtil} is especially suited to the balanced chart (and any other chart with
similar properties), in the sense that densities then also belong to this space and
vary continuously on the manifolds.  It has shown that this property is also true of
a particular fixed norm space involving two derivatives, but can be retained for
fixed norm spaces with more than two derivatives only with the loss of Lebesgue
exponent.  The paper has outlined an application of the manifolds to nonlinear
filtering (and hence to the Fokker-Planck equation).  Although motivated by problems
of this type, the manifolds are clearly applicable in other domains, the Boltzmann
equation of statistical mechanics being an obvious candidate.

The deformed exponential function used in the construction of $M$ has {\em linear
growth}, a feature that has recently been shown to be advantageous in {\em quantum
information geometry} \cite{naud2}.  The linear growth arises from the deformed
logarithm of (\ref{eq:phitdef}), which is dominated by the density, $p$, when the
latter is large.  As recently pointed out in \cite{mopi2}, this property is shared
by other deformed exponentials, notably the Kaniadakis $1$-exponential
$\psi_K(z)=z+\sqrt{1+z^2}$.  The corresponding deformed logarithm is
$\log_K(y)=(y^2-1)/2y$, and so the density is controlled (when close to zero) by the
term $-1/p$ rather than $\log p$, as used here.  In the non-parametric setting, the
need for both $p$ and $1/p$ to be in $L^{\lambda_0}(\mu)$ places significant
restrictions on membership of the manifold.  If, for example, the reference measure
of Example \ref{ex:muex}(i) is used, and $t=1$, then the measure having density
$C\exp(-\alpha|x|)$ (with respect to Lebesgue measure) belongs to the manifold only
if $|\alpha-1|<1/\lambda_0$.

The Kaniadakis $1$-exponential shares the properties of $\psi$ used in this paper;
these are summarised in Lemma \ref{le:kanexp}, which is easily proved by induction.

\begin{lem} \label{le:kanexp}
\begin{enumerate}
\item[(i)] The Kaniadakis $1$-exponential $\psi_K:\R\rightarrow(0,\infty)$ is
  diffeomorphic; in particular
  \begin{equation}
  \psi_K^{(1)} = \frac{2\psi_K^2}{1+\psi_K^2} > 0\quad {\rm and} \quad
  \psi_K^{(2)} = \frac{8\psi_K^3}{(1+\psi_K^2)^3} > 0, \label{eq:kander}
  \end{equation}
  and so $\psi_K$ is strictly increasing and convex.
\item[(ii)] For any $n\ge 2$,
  \begin{equation}
  \psi_K^{(n)} = \frac{Q_{3(n-2)}(\psi_K)}{(1+\psi_K^2)^{2(n-1)}}\psi_K\psi_K^{(1)},
                   \label{eq:kanderbnd}
  \end{equation}
  where $Q_{3(n-2)}$ is a polynomial of degree no more than $3(n-2)$.  In particular,
  $\psi_K^{(n)}$, $\psi_K^{(n)}/\psi_K$ and $\psi_K^{(n)}/(\psi_K\psi_K^{(1)})$ are
  all bounded.
\end{enumerate}
\end{lem}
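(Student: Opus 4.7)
For part (i), the plan is a direct computation. Since $\sqrt{1+z^2}\ge|z|\ge -z$, we have $\psi_K>0$ on $\R$, and $\psi_K\in C^\infty(\R)$. Differentiating gives $\psi_K^{(1)}(z) = 1 + z/\sqrt{1+z^2} = \psi_K(z)/\sqrt{1+z^2}$. Squaring the identity $\psi_K-z=\sqrt{1+z^2}$ yields the key algebraic bridge $\psi_K^2=1+2z\psi_K$, whence $\sqrt{1+z^2}=\psi_K-z=(1+\psi_K^2)/(2\psi_K)$. Substituting delivers $\psi_K^{(1)}=2\psi_K^2/(1+\psi_K^2)$, and one more application of the quotient rule (combined with this formula for $\psi_K^{(1)}$) produces $\psi_K^{(2)} = 8\psi_K^3/(1+\psi_K^2)^3$. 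Both derivatives are strictly positive, giving strict monotonicity and convexity. Together with the limits $\psi_K(z)=1/(\sqrt{1+z^2}-z)\to 0$ as $z\to-\infty$ and $\psi_K(z)\to\infty$ as $z\to\infty$, this makes $\psi_K$ a smooth bijection $\R\to(0,\infty)$ with nonvanishing derivative, and the inverse function theorem finishes part (i).

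For part (ii), I would induct on $n\ge 2$. The base case follows from part (i) by comparing $\psi_K^{(2)}=8\psi_K^3/(1+\psi_K^2)^3$ with $\psi_K\psi_K^{(1)}=2\psi_K^3/(1+\psi_K^2)$, yielding $Q_0\equiv 4$. For the inductive step, rewrite the hypothesis in the equivalent form
\begin{equation*}
\psi_K^{(n)} = \frac{2\,Q_{3(n-2)}(\psi_K)\,\psi_K^3}{(1+\psi_K^2)^{2n-1}}
\end{equation*}
using $\psi_K^{(1)}=2\psi_K^2/(1+\psi_K^2)$, and differentiate in $z$ by the chain rule (differentiate with respect to $\psi_K$ and multiply by $\psi_K^{(1)}$). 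The product and quotient rules produce three summands over the common denominator $(1+\psi_K^2)^{2n}$: one from $Q'(\psi_K)$, one from $\psi_K^3$, and one from the derivative of the reciprocal power. I would then factor $\psi_K^2$ out of the numerator so that it pairs with the trailing $\psi_K^{(1)}=2\psi_K^2/(1+\psi_K^2)$ to recover $\psi_K\psi_K^{(1)}$, and read off the remaining polynomial factor as $\tilde Q:=Q_{3((n+1)-2)}$.

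The one step requiring care is the degree bookkeeping for $\tilde Q$. With $Q:=Q_{3(n-2)}$ of degree at most $3n-6$, the three summands in the numerator have degrees in $\psi_K$ at most $(3n-7)+3+2=3n-4$, $(3n-6)+2=3n-4$, and $(3n-6)+4=3n-2$ respectively; after removing the common factor $\psi_K^2$, $\tilde Q$ has degree at most $3n-4\le 3(n-1)=3((n+1)-2)$, within budget. The boundedness claims then follow immediately from the established recursion: the identity $\psi_K^{(n)}/(\psi_K\psi_K^{(1)})=Q_{3(n-2)}(\psi_K)/(1+\psi_K^2)^{2(n-1)}$ is a rational function in $\psi_K$ with numerator degree $\le 3n-6$ and denominator degree $4n-4$, hence bounded on $(0,\infty)$; the analogous rewrites $\psi_K^{(n)}/\psi_K=2\,Q(\psi_K)\psi_K^2/(1+\psi_K^2)^{2n-1}$ and $\psi_K^{(n)}=2\,Q(\psi_K)\psi_K^3/(1+\psi_K^2)^{2n-1}$ have numerator degrees $3n-4$ and $3n-3$ over denominator degree $4n-2$, both also bounded. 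No real obstacle appears beyond careful algebraic bookkeeping; the inductive structure exactly parallels that of Lemma \ref{le:psidif}, with the single complication that the role of the factor $(1+\psi)$ is played here by $(1+\psi_K^2)$, so each differentiation of the reciprocal power contributes an extra factor of $\psi_K$ rather than a constant.
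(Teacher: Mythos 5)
Your proof is correct and takes essentially the approach the paper intends (the paper gives no details, saying only that the lemma ``is easily proved by induction''): a direct computation of $\psi_K^{(1)}$ and $\psi_K^{(2)}$ via the identity $\sqrt{1+z^2}=(1+\psi_K^2)/(2\psi_K)$, followed by induction on $n$ paralleling Lemma \ref{le:psidif}. The only blemishes are trivial slips in your degree list --- $(3n-7)+3+2=3n-2$, not $3n-4$, and over the common denominator $(1+\psi_K^2)^{2n}$ the first two summands carry an extra factor $(1+\psi_K^2)$, so all three have degree at most $3n-2$ --- but after extracting the factor $\psi_K$ needed to form $\psi_K\psi_K^{(1)}$ the remaining polynomial has degree at most $3n-3=3\bigl((n+1)-2\bigr)$, so the bookkeeping and the boundedness conclusions stand.
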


We can therefore construct a manifold of finite measures $M_K$, as in
section \ref{se:manifmt}, substituting the chart of (\ref{eq:phitdef}) by
$\phi_K:M_K\rightarrow G$, defined by $\phi_K(P) = \log_Kp$.
The only properties of $\psi$ used in section \ref{se:manifmt} are its strict
positivity, and the boundedness of its derivatives, properties shared by $\psi_K$.
The results in section \ref{se:specms} carry over to $M_K$ with the exception of
Proposition \ref{pr:rapsi}(iii).  Most of these depend only on the boundedness
of the derivatives of $\psi$; however, the integration by parts in (\ref{eq:parts1})
uses (\ref{eq:psidrec}), which can be substituted by (\ref{eq:kanderbnd})
in the case of $M_K$.  The results of section \ref{se:manifm} all carry over to
$M_K$.  Proposition \ref{pr:bijec}(v) depends on the strict positivity of
$\inf_{a\in B}\Emu\psi^{(1)}(a)$ for bounded sets $B\subset G$.  This is also
true of $\psi_K$, since
\begin{equation}
\Emu\psi_K^{(1)}(a)
  \ge 2\exp\left(2\Emu\log p - \Emu\log(1+p^2)\right)
  \ge (\Emu p^{-1})^{-2}/2, \label{eq:emupsik}
\end{equation}
where we have used Jensen's inequality in both steps.

$M_K$ is a
subset of $M$.  Let $\tau:\R\rightarrow\R$ be the ``transition function''
$\tau(z)=\log_d\psi_K(z)$.  All derivatives of $\tau$ are bounded, which explains
why the regularity of the KL-divergence on $M$ carries over to $M_K$.  Furthermore,
it follows from arguments similar to those used in the proof of Proposition
\ref{pr:rapsi} that the superposition operator $T_m:G_m\rightarrow G_m$ defined by
$T_m(a)(x)=\tau(a(x))$ is continuous for any of the mixed norm model spaces of
section \ref{se:gtil}.

The deformed logarithm of (\ref{eq:phitdef}) was chosen in \cite{newt4} because
the resulting manifold is highly inclusive, and suited to the Shannon-Fisher-Rao
information geometry.  In this context, it yields the global bound
(\ref{eq:divsumt}). The Kaniadakis $1$-logarithm is less suited to this geometry,
but more to that generated by the Kaniadakis $1$-entropy, which is of interest in
statistical mechanics.  The full development of this geometry is beyond the scope
of this article.

Condition (\ref{eq:thetadef}) (on the reference measure $\mu$) has to be considered
in the context of (M2), which places upper and lower bounds on the rate at which the
densities of measures in $M$ can decrease as $|x|$ becomes large.  For example, if
all nonsingular Gaussian measures are to belong to $M$, then (M2) requires $r$ to
decay more slowly than a Gaussian density, but more rapidly than a Cauchy density.
Variants of the reference measure $\mu$ with $t\in[1,2)$ may be good choices for
such applications.

\end{document}